\newcommand{\set}[1]{\left\lbrace #1 \right\rbrace}
\newcommand{\R}{\mathbb{R}}
\newcommand{\N}{\mathbb{N}}
\newcommand{\Q}{\mathbb{Q}}
\newcommand{\bS}{\mathbb{S}}
\newcommand{\T}{\mathbb{T}}
\newcommand{\Z}{\mathbb{Z}}
\newcommand{\B}{\mathcal{B}}
\newcommand{\D}{\mathcal{D}}
\newcommand{\G}{\mathcal{G}}
\renewcommand{\L}{\mathcal{L}}
\newcommand{\cO}{\mathcal{O}}
\newcommand{\cV}{\mathcal{V}}
\newcommand{\eps}{\varepsilon}
\newcommand{\vphi}{\varphi}
\def\ip{\@ifnextchar\bgroup{\@with}{\@without}}
\def\@with#1#2{\langle #1,#2 \rangle}
\def\@without{\langle \cdot,\cdot \rangle}
\newcommand{\WB}{W\!B}
\DeclareMathOperator{\dig}{dig}
\DeclareMathOperator{\absdig}{absdig}
\DeclareMathOperator{\reldig}{reldig}
\theoremstyle{plain}
\newtheorem{thm}{Theorem} 
\newtheorem{proposition}[thm]{Proposition}
\theoremstyle{definition}
\newtheorem{definition}{Definition}
\theoremstyle{remark}
\newtheorem{remark}{Remark}
\newcommand{\Eq}[1]{\eqref{eq:#1}}
\newcommand{\Th}[1]{Th.~\ref{thm:#1}}
\newcommand{\Sec}[1]{\S \ref{sec:#1}}
\newcommand{\Fig}[1]{Fig.~\ref{fig:#1}}
\newcommand{\InsertFig}[4]
{\begin{figure}[h!t]
       \centerline{
         \includegraphics[width=#4\linewidth]{./#1}
       }
       \caption{{\footnotesize  #2}
       \label{fig:#3}}
\end{figure}}
\newcommand{\InsertFigTwo}[5] {
\begin{figure*}[h!t]
       \centerline{
         \includegraphics[width=#5\textwidth]{./#1}
         \hskip 0.2in
         \includegraphics[width=#5\textwidth]{./#2}
       }
       \caption{{\footnotesize  #3}
       \label{fig:#4}}
\end{figure*}}
\newcommand{\InsertFigSix}[9] {
\begin{figure}[h!t]
       \centerline{
\renewcommand{\arraystretch}{0.01}
         \begin{tabular}{ccc}
         \includegraphics[width=#9]{./#1}&  \includegraphics[width=#9]{./#2}  &
             \includegraphics[width=#9]{./#3} \\  
         \includegraphics[width=#9]{./#4} & \includegraphics[width=#9]{./#5}  & 
             \includegraphics[width=#9]{./#6}
        \end{tabular}
       }
       \caption{{\footnotesize #7}
       \label{fig:#8}}
\end{figure}}
\newcommand{\beq}[1]{\begin{equation}\label{eq:#1}}
\newcommand{\eeq}{\end{equation}}
\newenvironment{se}[1]{\equation\label{eq:#1}\aligned}{\endaligned\endequation}
\newcommand{\bsplit}[1]{\begin{se}{#1}}
\newcommand{\esplit}{\end{se}}
\begin{document}
\title[Weighted Birkhoff Averaging]{Distinguishing between Regular and Chaotic orbits of Flows by the Weighted Birkhoff Average}

\author{Nathan Duignan}
\address{Department of Mathematics and Statistics, University of Sydney, NSW, 2006, Australia}
\email{Nathan.Duignan@sydney.edu.au}
\author{James~D. Meiss} 
\address{Department of Applied Mathematics, University of Colorado, Boulder, CO, 80309-0526 USA}


\begin{abstract}
This paper investigates the utility of the weighted Birkhoff average (WBA) for distinguishing between regular and chaotic orbits of flows, extending previous results that applied the WBA to maps.
It is shown that the WBA can be super-convergent for flows when the dynamics and phase space function are
smooth, and the dynamics is conjugate to a rigid rotation with Diophantine rotation vector.
The dependence of the accuracy of the average on orbit length and width of the weight function width are investigated.
In practice, the average achieves machine precision of the rotation frequency of quasiperiodic orbits 
for an integration time of $\cO(10^3)$ periods.
The contrasting, relatively slow convergence for chaotic trajectories allows an efficient discrimination criterion. 
Three example systems are studied: a two-wave Hamiltonian system, a quasiperiodically forced,
dissipative system that has a strange attractor with no positive Lyapunov exponents,
and a model for magnetic field line flow.
\end{abstract}
\date{\today}

\maketitle

\tableofcontents 


\section{Introduction}\label{sec:Intro}

Integrability is associated with quasiperiodic dynamics and chaos with sensitive dependence on initial conditions. This contradistinction is especially relevant for smooth Hamiltonian systems: when such a flow is \textit{integrable} the orbits are confined to tori on which the dynamics is conjugate to a rigid rotation. When a Hamiltonian system is smoothly perturbed away from integrability, some of these tori persist---according to KAM theory---and some are replaced by isolated periodic orbits, islands, or chaotic regions \cite{Arnold78}. Typically as a perturbation grows the proportion of chaotic orbits increases and more of the tori are destroyed. 

Motivated by broad applications to dynamical systems, including fluid flow, the $n$-body problem, and toroidal magnetic confinement, there has been a concentrated effort to distinguish between chaotic regions of phase space and those with regular dynamics. Invariant tori in Hamiltonian systems can be computed as limits of periodic orbits \cite{Greene79, MacKay83} or by the iterative, parameterization method \cite{Haro06}. In these methods, one fixes a frequency vector and attempts to find an invariant set on which the dynamics has this frequency. 

More generally---even when the system is not Hamiltonian---one may try to detect when a given orbit is chaotic. By definition, a dynamical system is chaotic on a compact invariant set when it is transitive and exhibits ``sensitive dependence on initial conditions'' \cite{Auslander80, Meiss17a}. Often such dynamics are (nonuniformly) hyperbolic, meaning that the maximal Lyapunov exponent is positive \cite{Robinson99}. For a flow
\beq{flow}
	\vphi: \R \times X \to X, \quad x_t = \vphi_t(x_0),
\eeq
on a phase space $X$, the exponent is
\beq{Lyapunov}
	\lambda(x_0,v_0) = \limsup_{T \to \infty} \frac{1}{T} \ln \|D\vphi_t(x_0) v_0 \|
\eeq
for initial condition $x_0$ and initial deviation vector $v_0$. A positive Lyapunov exponent implies that the
length of the infinitesimal deviation grows exponentially in $T$, at least asymptotically.
The most common approach for distinguishing chaos from regularity is to numerically compute \Eq{Lyapunov};
however, accurate computation of $\lambda$ is difficult because convergence is typically
as slow as $\tfrac{\ln(T)}{T}$ \cite{Cincotta16}.
Computation of \Eq{Lyapunov}  is also expenstive because it is necessary to integrate both the trajectory and the linearized dynamics to obtain the Jacobian $D\vphi_t$.

There are a number of techniques that have been used to improve the efficiency of estimates
for exponential divergence. These include methods based on \Eq{Lyapunov} such as the 
Fast Lyapunov Indicator (FLI) \cite{Froeschle97a, Lega16}, which uses a large value of
\beq{FLI}
	\operatorname{FLI}(x_0,v_0) = \sup_{t<T} \ln \|D\vphi_t(x_0) v_0 \|
\eeq
as an indicator for chaos. A related idea, the Mean Exponential Growth factor of Nearby Orbits (MEGNO)
\cite{Giordano04, Cincotta16}, uses---instead of the supremum in \Eq{FLI}---the average
of this log-length along an orbit. Further techniques include computing Greene's residue \cite{Greene79},
Slater's method \cite{abudSlaterCriterionBreakup2015}, the 0-1 test \cite{gottwald01TestChaos2016},
SALI and GALI \cite{skokosSmallerSALIGeneralized2016}, expansion entropy \cite{huntDefiningChaos2015},
and converse KAM theory \cite{MacKay85,Duignan20}. 



In this paper, we explore an alternative technique to distinguish between chaotic and regular orbits of
a flow based on the Weighted Birkhoff Average (WBA) \cite{Das16a,Das16b,Das17}. This method permits one
to accurately and efficiently compute the average of a function $h: X \to \R$, when the orbit is regular.
In particular, $h$ can be chosen to give the rotation vector for regular orbit on an invariant torus,
so that it can also provide a distinction between resonant and quasiperiodic dynamics.
The technique is analogous to ``frequency analysis'' \cite{Laskar92, Bartolini96}, which uses a windowed
Fourier transform to compute rotation numbers. For the WBA, the choice of a smooth window or weight function
allows for more rapid convergence. 

Indeed, as we recall in \Sec{Birkhoff}, the method proposed in \cite{Das16a} uses a $C^\infty$ weight function, which has been shown by \cite{Das18b} to lead to super-polynomial convergence of the average for maps. 
We will generalize these results for flows in \Sec{SuperConvergence}. In \Th{superconvergence}, we establish the super-polynomial convergence of the WBA to the space average provided that the flow is quasi-periodic on an $n$-torus with Diophantine rotation vector. Theorem~\ref{thm:general_superconvergence} extends this, using the results of \cite{kachurovskiiMaximumPointwiseRate2021}, to give a weaker criteria for super-polynomial convergence.

In \Sec{WBA_Test} we use the distinction between convergence rates for regular and chaotic orbits to give a criterion for detecting chaos. The method is analogous to that used in \cite{Sander20,Meiss21} for maps. Then, in \Sec{applications}, we apply this test to three examples.

The first application, in \Sec{twoWaveModel}, is to the two-wave model, perhaps the simplest nonintegrable, $1\tfrac12$ degree-of-freedom Hamiltonian system. The model was also studied in \cite{MacKay89a,Duignan20} using the converse KAM theory to detect chaos, and consequently gives a contrast between the two methods. We also investigate the dependence of the accuracy of the WBA on the choice of orbit length and weight function width. 

In \Sec{QPendulum} we investigate the properties of the WBA for a quasiperiodically forced pendulum that can have geometrically strange attractors with no positive Lyapunov exponents \cite{Romeiras87}. Since the general definition of chaos requires only the topological form of sensitive dependence on initial conditions and not exponential divergence \cite{huntDefiningChaos2015}, orbits of this quasiperiodic system with zero exponents may still be chaotic \cite{Glendinning06}. As we will show, even though methods based on Lyapunov exponents would fail,
the WBA can still provide an efficient indicator of chaos, 

The final application, in \Sec{FieldLines}, is to magnetic field line flow. 
Integrable magnetic field-line configurations are desirable in the design of plasma confinement devices. 
For example, the tokamak is designed to have a set of nested, axisymmetric tori
that are tangent to the magnetic field and correspond to
iso-pressure surfaces \cite{Hazeltine03}. 
However, integrability can be destroyed by instabilities or non-axisymmetric perturbations that can
give rise to magnetic islands and chaotic regions \cite{Helander14}. Using a model introduced by
\cite{paulHeatConductionIrregular2022}, we will show that the weighted Birkhoff
average can rapidly and accurately measure the extent of these regions.
Moreover, in the study of plasma stability it is important to know the rotation number, 
or rotational transform, on each magnetic surface. 
We will demonstrate that the weighted Birkhoff average efficiently computes the rotational transform.

\section{Weighted Birkhoff Averaging for Flows}\label{sec:Birkhoff}

\subsection{Birkhoff Average}\label{sec:BA}
Suppose that $(X,\B,\mu)$ is a probability space \cite{Walters82}, 
and that the measure $\mu$ is invariant under a flow \Eq{flow}.
The flow is \textit{ergodic} with respect to $\mu$ if, whenever $S \in \B$ is invariant,
i.e., $\vphi_t(S) = S,\, \forall t\in \R$, then $\mu(S) = 0$ or $1$.
Thus invariant sets are either of zero measure, such as periodic orbits, or of full measure, such as chaotic orbits.
Birkhoff's ergodic theorem \cite{Birkhoff31b}, states that if a flow is ergodic then for $\mu$-almost every $x \in X$, the time average of a function $h$,
\beq{BirkhoffAverage}
	B_T(h)(x) = \frac{1}{T}\int_{0}^{T} h(\vphi_t(x)) dt,
\eeq
converges to its space average
\beq{spaceAverage}
	\langle h \rangle \equiv \int_X h d\mu
\eeq
as $T\to\infty$.
\begin{thm}[Birkhoff Ergodic Theorem \cite{Birkhoff31a,Birkhoff31b}]\label{thm:BET}
Suppose $(X,\B,\mu)$ is a probability space, $\vphi_t: X \to X$ is a measure-preserving, ergodic flow, and $h \in L^1(X,\mu)$. Then the time average exists and
\[ 
	 \lim_{T\to \infty} B_T(h)(x) = \langle h \rangle
\] 
for $\mu$-almost every $x \in X$.
\end{thm}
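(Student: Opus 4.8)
The plan is to reduce the continuous-time statement to the classical discrete-time Birkhoff ergodic theorem applied to the time-one map $\vphi_1$, and then to promote convergence along integer times to convergence along all $T\to\infty$. Set $g(x) = \int_0^1 h(\vphi_s(x))\,ds$ and $G(x) = \int_0^1 |h(\vphi_s(x))|\,ds$. Using the joint measurability of $(s,x)\mapsto h(\vphi_s(x))$ (part of the standing assumption that $\vphi$ is a measurable flow), Tonelli's theorem and $\vphi_s$-invariance of $\mu$ give $\int_X |g|\,d\mu \le \int_X G\,d\mu = \|h\|_{L^1(\mu)}<\infty$, so $g,G\in L^1(X,\mu)$ and both are finite $\mu$-a.e. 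Applying the discrete Birkhoff theorem to $(\vphi_1,g)$ and to $(\vphi_1,G)$, the averages $\frac1N\sum_{k=0}^{N-1} g(\vphi_1^k(x))$ and $\frac1N\sum_{k=0}^{N-1} G(\vphi_1^k(x))$ converge $\mu$-a.e.\ to limits $\bar g(x)$, $\bar G(x)$, with $\int_X\bar g\,d\mu = \int_X g\,d\mu$. The cocycle identity $\sum_{k=0}^{N-1} g(\vphi_1^k(x)) = \int_0^N h(\vphi_t(x))\,dt$ shows this is precisely $B_N(h)(x)\to\bar g(x)$ along integers, and Fubini gives $\int_X\bar g\,d\mu = \int_X g\,d\mu = \langle h\rangle$.

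For the genuinely continuous-time step, write $T=N+\tau$ with $N=\lfloor T\rfloor$, $\tau\in[0,1)$; a direct estimate yields
\[
	\Bigl|\,B_T(h)(x) - \tfrac1N\!\int_0^N h(\vphi_t(x))\,dt\,\Bigr|
	\;\le\; \frac1N\!\int_N^{N+1}\!\!|h(\vphi_t(x))|\,dt \;+\; \frac1T\cdot\frac1N\!\int_0^N\!\!|h(\vphi_t(x))|\,dt .
\]
The second summand tends to $0$ since $\frac1N\int_0^N|h\circ\vphi_t|\,dt \to \bar G(x)$; for the first, $\int_N^{N+1}|h(\vphi_t(x))|\,dt = G(\vphi_1^N(x))$, and a.e.\ convergence of the Birkhoff averages of $G$ forces $\frac1N G(\vphi_1^N(x)) = \frac1N\bigl(\sum_{k=0}^{N}-\sum_{k=0}^{N-1}\bigr)G(\vphi_1^k(x))\to0$ $\mu$-a.e. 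Hence $B_T(h)(x)\to\bar g(x)$ for $\mu$-a.e.\ $x$ as $T\to\infty$.

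It remains to identify the limit. For each fixed $s$, $B_T(h)(\vphi_s(x)) = \tfrac{T+s}{T}B_{T+s}(h)(x) - \tfrac1T\int_0^s h(\vphi_u(x))\,du \to \bar g(x)$ as $T\to\infty$, so $\bar g\circ\vphi_s = \bar g$ $\mu$-a.e.; consequently every superlevel set $\{\bar g>c\}$ is $\vphi$-invariant up to a $\mu$-null set and hence, by \emph{ergodicity of the flow}, has measure $0$ or $1$, so $\bar g$ equals the constant $\int_X\bar g\,d\mu = \langle h\rangle$ for $\mu$-a.e.\ $x$. The step I expect to demand the most care is this last one, together with the measurability and $L^1$ bookkeeping for $g$ and $G$: the time-one map $\vphi_1$ need \emph{not} be ergodic even when the flow is, so the discrete theorem alone does not produce a constant limit---it is the flow-invariance of $\bar g$ that forces constancy. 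An alternative, more self-contained route bypasses $\vphi_1$ entirely by first establishing a continuous-time (Wiener) maximal ergodic inequality $\mu\{\sup_{T>0} B_T(|h|)>\alpha\}\le\|h\|_{L^1}/\alpha$ and then running the usual $\limsup$--$\liminf$ comparison directly; I would expect the time-one-map reduction to be the shorter path.
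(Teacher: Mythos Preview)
Your argument is correct, but it takes a different route from the paper's. The paper invokes a result of Pugh--Shub: for an ergodic flow, the time-$\tau$ map $\vphi_\tau$ is itself ergodic for all but countably many $\tau$. Choosing such a $\tau$, the discrete Birkhoff theorem applied to $\tilde h(x)=\int_0^\tau h(\vphi_t(x))\,dt$ yields the \emph{constant} limit $\langle h\rangle$ immediately, with no further identification step needed. You instead work with the time-one map $\vphi_1$, which need not be ergodic, and compensate by proving flow-invariance of the limit $\bar g$ and then invoking ergodicity of the flow to force constancy. Your route is more self-contained (no Pugh--Shub), and you handle the remainder term $\frac1N G(\vphi_1^N(x))\to0$ more carefully than the paper's sketch, which simply asserts that $\frac1T\int_{\tau\lfloor T/\tau\rfloor}^T h(\vphi_t(x))\,dt\to0$ ``since $h\in L^1$'' without justification. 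The price you pay is the final invariance-and-constancy step, where one should note that the definition of ergodicity in the paper is for \emph{strictly} invariant sets; passing from ``$\bar g\circ\vphi_s=\bar g$ $\mu$-a.e.\ for each $s$'' to ``$\bar g$ is a.e.\ constant'' is cleanest via the observation that $\limsup_{T\to\infty}B_T(h)(x)$ is genuinely flow-invariant and equals $\bar g$ a.e.
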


The Birkhoff ergodic theorem for a map $F:X \to X$ can be obtained upon replacing the integration in \Eq{BirkhoffAverage} by a sum,
\[
	B_N(h)(x) = \frac{1}{N}\sum_{j=0}^{N-1} h( F^j(x)).
\]
Indeed, in the literature, \Th{BET} is almost exclusively stated and proven for maps \cite{ Billingsley65, Cornfeld82, Breiman92}. However, as pointed out in \cite{bergelsonDiscreteContinuoustimeErgodic2012}, there is a neat trick to obtain the continuous case from the map case. 

This is based on the result that if $\vphi_t(x)$ is an ergodic flow then for each $\tau \in \R$, except for a countable subset, the map, $\vphi_\tau$, is also ergodic \cite{pughErgodicElementsErgodic1971}.
For such a value of $\tau$, define $\tilde{h}(x) = \int_{0}^{\tau} h(\vphi_t(x)) dt$. Then
\[ 
	\int_0^T h(\vphi_t(x)) dt = \sum_{j=0}^{\lfloor T/\tau \rfloor} \tilde{h}(\vphi_\tau^j(x))+ \int_{\lfloor T/\tau \rfloor}^T h(\vphi_t(x)) dt,
\]
since $\vphi_\tau^0(x) = x$.
Since $h$ is assumed to be $L^1(X,\mu)$, then $\lim_{T\to\infty}\tfrac{1}{T} \int_{\lfloor T/\tau \rfloor}^T h(\vphi_t(x)) dt = 0$. By the Birkhoff ergodic theorem for maps, it then follows that
\begin{align*}
	\lim_{T\to\infty} \frac{1}{T}\int_0^T h(\vphi_t(x)) dt 
	  &= \frac{1}{\tau}\lim_{T \to \infty} \frac{1}{T/\tau} \sum_{j=1}^{\lfloor T/\tau \rfloor} \tilde{h}(\vphi_\tau^j(x)) \\
	  &= \frac{1}{\tau}\int_{X} \tilde{h} d\mu 
	   = \frac{1}{\tau} \int_X \int_0^\tau h(\vphi_t(x)) d\mu \\
	  &= \frac{1}{\tau} \int_0^\tau \int_X h(x) d\mu \\
	  &= \int_X h d\mu .
\end{align*}
Here the penultimate equality is a consequence of Fubini's theorem and the fact that $\mu$ is invariant under $\vphi_t$.

Even though the convergence in \Th{BET} is guaranteed, it can be arbitrarily slow depending on the choice of $h$ \cite{Krengel78}. Furthermore for almost all $h\in L^1(X,\mu)$, it has been demonstrated that for maps the convergence is at most $\cO(1/N)$ \cite{Kachurovskii96}, and for flows on a Lebesgue space is at most $\cO(1/T)$ for almost all $x\in X$ \cite{kachurovskiiMaximumPointwiseRate2021}. The only exceptions are when $h$ is almost everywhere constant. 

\subsection{Weighted Birkhoff Average}\label{sec:WBA}

A weighted Birkhoff average is analogous to \Eq{BirkhoffAverage}, with the addition of a weight function $g: [0,1] \to [0,\infty)$, that is normalized:
\beq{Normalization}
	||g||_1 \equiv \int_0^1 g(s) ds = 1.
\eeq
For any $g\in\G$, the weighted Birkhoff average is defined by
\beq{WeightedBirkhoff}
	\WB_T(h)(x) = \frac{1}{T} \int_0^T g(\tfrac{t}{T}) h(\vphi_t(x)) dt.
\eeq
In particular, choosing $g(s) =1 $ gives the Birkhoff average \Eq{BirkhoffAverage}.

By a judicious choice of the weight $g$, the convergence of $\WB_T(h)$ to the space average can be accelerated for certain flows. In particular, we will consider the space of bump functions whose first $m$ derivatives vanish on the boundary,
\beq{Gboundary}
	\G_m = \left\{\left. g\in C^m([0,1],\R^+)\, \right| \, \|g\|_1 = 1, \,g^{(i)}(0) = g^{(i)}(1) = 0,\, i = 0, 1, \ldots, m-1 \right\}.
\eeq
For example, the smooth bump function
\beq{Bump}
	g(s) = \begin{cases}
				C e^{-[s(1-s)]^{-1} } & s\in(0,1) \\
				0 & s=0,1 \\
			\end{cases},
\eeq
is in $\G_\infty$ and was adopted by \cite{Das16b,Das17,Das19} in their studies of maps.
Here, we set the normalization constant $C \approx 142.2503758$ to satisfy \Eq{Normalization}.

Whenever $g \in \G_1$ it can be shown, using a result of \cite{silvermanNotionSummabilityLimit1916},
that the weighted Birkhoff average $\WB_T(h)(x)$ converges to the space average $\langle h \rangle$ 
for any $h\in L^1(X,\mu)$. Of course this applies to the case \Eq{Bump} as well.

\begin{proposition} Under the hypotheses of \Th{BET}, then whenever $g\in \G_1$
\[ 
	\lim_{T\to \infty} \WB_T(h) = \langle h \rangle,
\]
for $\mu$-almost every $x \in X$.
\end{proposition}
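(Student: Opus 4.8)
The plan is to recognize $\WB_T$ as a regular (limit-preserving) continuous summability method applied to the Birkhoff partial averages $B_T(h)(x)$, and then to invoke \Th{BET} together with dominated convergence. Concretely, first I would apply \Th{BET} to both $h$ and $|h|$ to obtain a full-measure set $X_0\subseteq X$ on which $t\mapsto h(\vphi_t(x))$ is locally integrable (this last point being itself a Fubini argument, since $\int_X\int_0^\tau |h(\vphi_t(x))|\,dt\,d\mu = \tau\|h\|_1<\infty$), and on which $B_T(h)(x)\to\langle h\rangle$ and $B_T(|h|)(x)\to\langle|h|\rangle$ as $T\to\infty$. Then I fix $x\in X_0$ and set $A(t)=\int_0^t h(\vphi_s(x))\,ds = tB_t(h)(x)$, which is absolutely continuous.

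Next I would integrate \Eq{WeightedBirkhoff} by parts in $t$, with $v=A$ and $u=g(t/T)$. Since $g\in\G_1$ forces $g(0)=g(1)=0$, the boundary terms vanish, and after the substitution $s=t/T$ one obtains the identity
\[
	\WB_T(h)(x) = -\int_0^1 s\,B_{sT}(h)(x)\,g'(s)\,ds .
\]
Applying the same manipulation to the constant function $h\equiv 1$ (or integrating by parts once more using $g(1)=0$ and the normalization \Eq{Normalization}) gives $-\int_0^1 s\,g'(s)\,ds = 1$, hence
\[
	\WB_T(h)(x)-\langle h\rangle = -\int_0^1 s\,\bigl(B_{sT}(h)(x)-\langle h\rangle\bigr)\,g'(s)\,ds .
\]
For each fixed $s\in(0,1]$ the integrand tends to $0$ as $T\to\infty$, because $sT\to\infty$ and $x\in X_0$; it then remains only to dominate the integrand uniformly in $T$ and let $T\to\infty$.

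The main obstacle is precisely this domination, the delicate point being the behaviour near $s=0$, where $B_{sT}(h)(x)$ could a priori be large. This is where also applying \Th{BET} to $|h|$ pays off: since $s\,B_{sT}(h)(x) = \tfrac1T\int_0^{sT} h(\vphi_t(x))\,dt$, one has $|s\,B_{sT}(h)(x)|\le \tfrac1T\int_0^{T}|h(\vphi_t(x))|\,dt = B_T(|h|)(x)$, and for $x\in X_0$ the right-hand side converges, hence is bounded by some constant $M(x)$ once $T$ is large. Therefore the integrand is bounded by $(M(x)+|\langle h\rangle|)\,\|g'\|_\infty$, which is integrable on $[0,1]$ because $g'$ is continuous there; the dominated convergence theorem then yields $\lim_{T\to\infty}\WB_T(h)(x)=\langle h\rangle$ for every $x\in X_0$, i.e.\ for $\mu$-almost every $x$. (Alternatively, one could simply quote the continuous summability theorem of \cite{silvermanNotionSummabilityLimit1916} and verify its hypotheses; the computation above is essentially a self-contained proof of the instance needed here, and makes transparent why vanishing of $g$ at the endpoints---the defining feature of $\G_1$---is exactly what makes the averaging method regular.)
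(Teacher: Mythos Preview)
Your proof is correct and follows essentially the same route as the paper: integrate \Eq{WeightedBirkhoff} by parts (using $g(0)=g(1)=0$) to express $\WB_T(h)(x)$ as a weighted average of the Birkhoff partial averages $B_t(h)(x)$, then use a regularity/summability argument. The paper does this by writing $\WB_T(h)(x)=\int_0^T k(T,t)\,B_t(h)(x)\,dt$ with $k(T,t)=-\tfrac{t}{T^2}g'(t/T)$ and verifying Silverman's three criteria \Eq{Silverman}; your substitution $s=t/T$ turns this into $-\int_0^1 s\,g'(s)\,B_{sT}(h)(x)\,ds$, and you then carry out the summability argument directly via dominated convergence rather than quoting Silverman.

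The only substantive difference is your extra care with the $L^1$ hypothesis: you invoke \Th{BET} for $|h|$ as well as for $h$, and use $|s\,B_{sT}(h)(x)|\le B_T(|h|)(x)$ to obtain an explicit dominating function; you also note the Fubini argument giving local integrability of $t\mapsto h(\vphi_t(x))$ for $\mu$-a.e.\ $x$. The paper's appeal to Silverman tacitly assumes such integrability and boundedness of $u(t)=B_t(h)(x)$, so your version is somewhat more self-contained on this point, but the underlying mechanism is identical.
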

\begin{proof}
	The proof relies on the summation criteria due to Silverman \cite{silvermanNotionSummabilityLimit1916}. Specifically, suppose that $k(T,t)$ is defined for $T\in\R^+$ and $0 \leq t \leq T$, is integrable for fixed $T$, and satisfies the three criteria:
\begin{subequations}\label{eq:Silverman}
	\begin{align}
		&\lim_{T\to\infty} \int_0^T k(T,t) dt = 1 ; \label{eq:Silverman1}\\
		&\lim_{T\to\infty} k(T,t) = 0 \text{ uniformly in $t$ on $[0,q]$ for each $q\in\R^+$}; \label{eq:Silverman2}\\
		&\int_0^T|k(T,t)| dt < A \text{ for some $A>0$ whenever $T\in \R^+$}. \label{eq:Silverman3}
	\end{align}
\end{subequations}
Then Silverman \cite[Thm.~1]{silvermanNotionSummabilityLimit1916} shows that whenever $u:\R^+ \to \R$ satisfies $\displaystyle\lim_{T\to\infty} u(T) = \bar{u} <\infty$
\[ 
	\lim_{T\to\infty} \int_0^T k(T,t) u(t) dt = \bar{u}.
\]

We will take $u(T) = B_T(h)(x)$ given by \Eq{BirkhoffAverage}. Using integration by parts on \Eq{WeightedBirkhoff},
and the assumptions that $g \in C^1([0,1],\R^+)$ and $g(0) = 0 = g(1)$, then gives
\bsplit{WBtoSilver}
	\WB_T(h)(x) &= \frac{1}{T} \int_0^T g(\tfrac{t}{T}) h(\vphi_t(x))\, dt \\
				&= \frac{1}{T}\left[ g(\tfrac{t}{T}) \int_0^t h(\vphi_s(x))\, ds \right]_0^T 
				 - \frac{1}{T^2} \int_0^T g^\prime(\tfrac{t}{T}) \int_0^t h(\vphi_s(x))\, ds \, dt \\
				&= \int_0^T k(T,t) B_t(h)(x)\, dt.
\esplit
where we set
\beq{SilvermanK}
	k(T,t) = -\frac{t}{T^2} g^\prime(\tfrac{t}{T}) . 
\eeq
We will check Silverman's criteria for \Eq{SilvermanK}. Again using integration by parts we have
\[ 
	\int_0^T \frac{t}{T^2} g^\prime(\tfrac{t}{T}) \, dt = \left[\frac{t}{T} g(\tfrac{t}{T}) \right]_0^T 
		+ \frac{1}{T} \int_0^T g(\tfrac{t}{T}) dt = 1,
\]
since $\|g\|_1 = 1$. It follows that $k$ is integrable for each $T\in\R^+$ and that \Eq{Silverman1} holds.
	
Now, for any $q >0$, $0 \le t \le q <T$ we have that 
\[ 
	|k(T,t)| = \frac{t}{T^2} |g^\prime(\tfrac{t}{T})| \leq \frac{q}{T^2}||g^\prime||_\infty , 
\]
where $||g^\prime||_\infty < \infty$ because $g$ is continuously differentiable
on a compact interval. It follows that $\lim_{T\to\infty} k(T,t) = 0$ and thus \cref{eq:Silverman2} holds.

Finally, for all $T \in \R^+$, \Eq{SilvermanK} gives
	\[ \int_0^T |k(T,t)| dt \leq ||g^\prime||_\infty T^{-2} \int_0^T t\, dt = \tfrac12 ||g^\prime||_\infty. \]
	Thus \Eq{Silverman3} holds with $A = \tfrac12 ||g^\prime ||_\infty$.

Hence, the choice \Eq{SilvermanK} satisfies the criteria \Eq{Silverman}. 
Since by Birkhoff's ergodic theorem, $B_T(h)(x) \to \langle h \rangle$ for $\mu$-almost all $x$,
then \cite[Thm.~1]{silvermanNotionSummabilityLimit1916} applied to \Eq{WBtoSilver}, 
gives $\WB_T(h)(x) \to \langle h \rangle$.
\end{proof}

\section{Super-Convergence for Flows} \label{sec:SuperConvergence}

It was shown in \cite{Das17, Das18b} that if a smooth map $F$ has a quasi-periodic orbit 
$\{F^t(x)\}$ with Diophantine rotation vector and $h$ and $g$ are $C^\infty$, 
then the weighted average \Eq{WeightedBirkhoff} is \emph{super-polynomial convergent}: $\WB_T(h)$ converges 
to $\langle h \rangle$ faster than any power of $T$. 
Super-convergence is especially useful for the case of Hamiltonian flows or symplectic maps, where regular orbits lie on invariant tori, and KAM theory implies the structural stability of those with Diophantine rotation vectors.
 
In this section we will extend the map result to the case of flows. In addition, following \cite{kachurovskiiMaximumPointwiseRate2021}, we will show that super-polynomial convergence also holds under weaker hypotheses on the ergodic flow and $h$.

Note that if the weight function $g$ has only finitely many vanishing derivatives at the endpoints, \Eq{Gboundary},
then the convergence rate is $T^{-m}$. For example the weight $g(t) \propto \sin^2(\pi t)$ for $t \in (0,1)$, 
is first order smooth, but not second order, since $g^{(2)}(0^+) \neq g^{(2)}(0^-)$, and this implies that the convergence is $\cO(T^{-2})$.

By contrast, the weighted average appears to converge only as $T^{-1/2}$ for chaotic orbits \cite{Levnajic10, Sander20}. Thus, as we will discuss in \Sec{WBA_Test}, the convergence rate of the weighted average can provide a useful distinction between regular and chaotic orbits \cite{Sander20,Meiss21}.

\begin{definition}[Super-Convergent]
A function $ f:[0,\infty) \to \R$ with $\lim_{T\to\infty} f(T) = f^* < \infty$ is \emph{super-polynomial} convergent if, for each $m \in \N$, there is a constant $c_m > 0$ such that 
\[ 
	|f(T) - f^*| \leq c_m T^{-m}, 
\]
for all $T > 0$.
\end{definition}
In particular the weighted Birkhoff average super-converges to the space average for flows that are conjugate to a rigid rotation with a sufficiently irrational vector, e.g., one that satisfies a Diophantine property.

\begin{definition}[Diophantine \cite{Lochak92}]\label{def:Diophantine}
A vector $\omega \in \R^d$ is \emph{Diophantine} if there is a $c>0$ and $\tau \ge d-1$ such that 
\beq{Diophantine}
	\omega \in \D_{c,\tau} := \set{\omega\in\R^d\, | \, 
		|k\cdot\omega| > c \|k\|^{-\tau},\, \forall k \in \Z^d_0}
\eeq 
where $\Z^d_0 = \Z^d\setminus 0$. 
\end{definition}

Thus, for example, the vector $\omega = (\phi,1) \in \R^2$, where $\phi = \tfrac12(\sqrt{5}-1)$ is the (inverse of)
the golden mean, is in $\D_{1/\sqrt{5},1}$. More generally, there are bounds for the Diophantine constants for integral bases of an algebraic field of degree $d$ \cite{Cusick74, Cassels97}.\footnote
{Note that for the discrete time case, a $k$-dimensional rotation vector $\alpha$ is Diophantine
if the vector $\omega = (\alpha,1) \in \R^{k+1}$ is Diophantine in the sense of \Eq{Diophantine}.}

With these definitions, we can restate the result of \cite{Das18b} for quasiperiodic flows.
\begin{thm}\label{thm:superconvergence}
Let $M \simeq \T^d$ be a smooth manifold and $\vphi_t:M\to M$ be a smooth, quasi-periodic flow with invariant probability measure $\mu$. Assume $\vphi_t$ is $C^\infty$ conjugate to a rigid rotation with a Diophantine rotation vector $\omega \in \D_{c,\tau}$. Suppose that $h \in C^\infty(M, \R)$ and $g \in \G_\infty$. Then for each $x \in M$, the weighted Birkhoff average \Eq{WeightedBirkhoff} is super-polynomial convergent. Moreover, the convergence is uniform in $x$.

More generally, if $g \in \G_m $ then the convergence of \Eq{WeightedBirkhoff} is as $T^{-m}$ provided that $h \in C^l(M,\R)$ for some $l>d+m\tau$.
\end{thm}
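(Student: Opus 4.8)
The plan is to reduce everything to Fourier analysis on the torus. Since $\vphi_t$ is $C^\infty$ conjugate to the rigid rotation $R_t(\theta) = \theta + \omega t$ on $\T^d$, write $\vphi_t = \Psi \circ R_t \circ \Psi^{-1}$ with $\Psi$ a $C^\infty$ diffeomorphism; the invariant measure $\mu$ pushes forward to Lebesgue measure on $\T^d$. Setting $\tilde h = h \circ \Psi \in C^\infty(\T^d)$, we have $\WB_T(h)(x) = \frac{1}{T}\int_0^T g(t/T)\,\tilde h(\theta_0 + \omega t)\,dt$ where $\theta_0 = \Psi^{-1}(x)$, and the space average $\langle h\rangle$ equals the zeroth Fourier coefficient $\hat{\tilde h}(0)$ of $\tilde h$. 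Expanding $\tilde h(\theta) = \sum_{k\in\Z^d} \hat{\tilde h}(k)\,e^{2\pi i k\cdot\theta}$, subtracting the $k=0$ term, and interchanging sum and integral, the error becomes
\[
	\WB_T(h)(x) - \langle h\rangle = \sum_{k\in\Z^d_0} \hat{\tilde h}(k)\, e^{2\pi i k\cdot\theta_0}\left[\frac{1}{T}\int_0^T g(t/T)\, e^{2\pi i k\cdot\omega t}\,dt\right].
\]

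The heart of the argument is to bound the bracketed oscillatory integral. Substituting $s = t/T$ gives $\int_0^1 g(s)\, e^{2\pi i T (k\cdot\omega) s}\,ds$, which is $\widehat{g}$ evaluated at frequency $-T(k\cdot\omega)$. Here one integrates by parts $m$ times, using that $g \in \G_m$ so that $g^{(i)}$ vanishes at both endpoints for $i < m$: all boundary terms drop, and one is left with $\frac{1}{(2\pi i T k\cdot\omega)^m}\int_0^1 g^{(m)}(s)\, e^{2\pi i T(k\cdot\omega)s}\,ds$, whose modulus is at most $\|g^{(m)}\|_1\, (2\pi T |k\cdot\omega|)^{-m}$. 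Now invoke the Diophantine condition $|k\cdot\omega| > c\|k\|^{-\tau}$ to get the bracket bounded by $C_m\, \|k\|^{m\tau}\, T^{-m}$. Combined with decay of the Fourier coefficients — $|\hat{\tilde h}(k)| \le C_l \|k\|^{-l}$ since $\tilde h \in C^l$ — the error is bounded by $C\, T^{-m}\sum_{k\in\Z^d_0}\|k\|^{m\tau - l}$, and this sum converges precisely when $l - m\tau > d$, i.e. $l > d + m\tau$, giving the $\cO(T^{-m})$ rate; the bound is independent of $\theta_0$, hence uniform in $x$. For the first (smooth) part of the theorem, $g \in \G_\infty$ lets us integrate by parts arbitrarily many times and $h \in C^\infty$ gives rapid Fourier decay, so for every $m$ we may choose $l > d + m\tau$ and obtain super-polynomial convergence; uniformity in $x$ again follows since $\theta_0$ enters only through the unit-modulus factor $e^{2\pi i k\cdot\theta_0}$.

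The main obstacle is not any single estimate — each is routine — but the bookkeeping needed to make the interchange of summation and integration rigorous and to track constants carefully enough that the final series converges under the stated hypothesis $l > d + m\tau$ rather than some lossier condition. One must be a little careful that the integration-by-parts in $s$ requires $g^{(m)}$ to exist and be integrable (true for $g \in \G_m$, with $\|g^{(m)}\|_\infty < \infty$ on the compact interval when $m \ge 1$), and that the Fourier-decay bound $|\hat{\tilde h}(k)| \lesssim \|k\|^{-l}$ for $\tilde h \in C^l(\T^d)$ is applied with the correct exponent. A secondary point worth stating explicitly is that $\Psi \in C^\infty$ ensures $\tilde h = h\circ\Psi$ retains the same order of smoothness as $h$ (in the $C^\infty$ case) or at least $C^l$ (in the finite case), so the Fourier hypotheses on $\tilde h$ are inherited from those on $h$; this is where smoothness of the conjugacy, not merely its existence, is used.
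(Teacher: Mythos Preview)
Your proposal is correct and follows essentially the same approach as the paper: reduce via the conjugacy to the rigid rotation on $\T^d$, expand in Fourier series, bound the oscillatory integral $\int_0^1 g(s)e^{i\Omega_k s}\,ds$ by integrating by parts $m$ times using the $\G_m$ boundary conditions, and then combine the Diophantine bound $|k\cdot\omega|^{-m} \le c^{-m}\|k\|^{m\tau}$ with the Fourier decay $|\hat{\tilde h}(k)|\lesssim\|k\|^{-l}$ to get a convergent series when $l>d+m\tau$. Your write-up is in fact slightly more explicit than the paper's on two points---that the conjugacy $\Psi$ must be smooth so that $\tilde h = h\circ\Psi$ inherits the $C^l$ regularity, and that uniformity in $x$ follows because $\theta_0$ enters only through the unimodular factor $e^{2\pi i k\cdot\theta_0}$---but the argument is the same.
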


\begin{proof}
	The proof follows the arguments of \cite{Das18b} with some minor alterations for the flow case. By assumption $\vphi$ is diffeomorphic to the flow $\vphi_t^\omega(\theta) = \theta + t\omega$ on $\T^d$. Hence, it can be assumed that we have taken coordinates $\theta \in \T^d$ on $M$ so that $\vphi_t$ is simply $\vphi_t^\omega$ and the invariant measure $\mu$ becomes the constant measure $d\theta$, which is preserved by $\vphi_t^\omega$.
Then
\[ 
	 h(\vphi_t(\theta)) = h(\theta + t\omega).
\]

The weighted Birkhoff average \Eq{WeightedBirkhoff} then becomes
\[
	\WB_T(h)(\theta) = \frac{1}{T}\int_0^T g(\tfrac{t}{T}) h(\theta + t\omega) dt.
\]
Since $h$ is $L^2$, it has a Fourier series
\[ 
	h = \sum_{k\in\Z^d} a_k e^{2\pi i k \cdot \theta}.
\]
that is almost everywhere convergent \cite{Grafakos14}.
Note that $a_0 = \int_{M} h d\theta $ and that $a_0 = \WB_T(a_0)(\theta_0)$ for any $\theta_0$ since $a_0$ is constant, for any $T$ and $\theta_0$.
It follows that 
\begin{align*}
	\left|\WB_T(h)(\theta) - \int_M h d\theta \right| &= \left| \sum_{k\in \Z^d_0} a_k \WB_T\left(e^{2\pi i k\cdot \theta}\right) \right| \\
	&= \frac{1}{T} \left| \sum_{k\in\Z^d_0} a_k e^{2\pi i k \cdot \theta} \int_0^T g(\tfrac{t}{T}) e^{2\pi i t k\cdot \omega} dt \right| \\
	& \leq \frac{1}{T} \sum_{k\in\Z^d_0} |a_k| \left|\int_0^T g(\tfrac{t}{T}) e^{2\pi i t k\cdot \omega } dt \right|
\end{align*}
Set $s = t/T$ so that $T ds = dt$ and define $\Omega_k = 2\pi T k \cdot \omega$. Then
\begin{equation*}
	\left|\WB_T(h)(\theta) - \int_M h d\theta \right| \leq \sum_{k\in \Z^d_0} |a_k| \left| \int_0^1 g(s) e^{i \Omega_k s} ds\right|.
\end{equation*}
	
Integrating by parts $m \le l$ times and noting that the boundary terms vanish by \Eq{Gboundary},
it follows that
\begin{align*}
	\left| \int_0^1 g(s)e^{i\Omega_k s} ds \right| &= |\Omega_k|^{-m} \left| \int_0^1 g^{(m)}(s) e^{i\Omega_k s} ds \right| \\
	& \leq |\Omega_k|^{-m} ||g^{(m)}||_1,
\end{align*}
where $|| \,||_1$ is the $L^1$ norm. 
	
Now, if $h \in C^{l}(\T^d,\R)$ then $|a_k| = \cO(\|k \|^{-l})$, so that there
is a constant $c_l>0$, independent of $k$, such that
\[ 
	|a_k| \leq \frac{c_l}{\|k\|^{l}},
\]
for each $k\in\Z^d_0$ \cite[Thm 3.3.9]{Grafakos14}. 
Thus,
\begin{align*}
	\left|\WB_T(h)(\theta) - \int_M h d\theta \right| &\leq c_l (2\pi T)^{-m}\sum_{k\in\Z^d_0} \|k\|^{-l} |k\cdot \omega|^{-m} ||g^{(m)}||_1 \\ 
	&= C^* T^{-m} \sum_{k\in\Z^d_0} \|k\|^{-l} |k\cdot \omega|^{-m},
\end{align*}
where we defined $C^* = c_l (2\pi)^{-m} ||g^{(m)}||_1$, which depends on $m$. The theorem will follow provided we can show that $ \sum_{k\in\Z^d_0} \|k\|^{-l} |k\cdot \omega|^{-m} $ is bounded.
Since $\omega \in \D_{c,\tau}$ \Eq{Diophantine} it follows that
\[
	\sum_{k\in\Z^d_0} \|k\|^{-l} |k\cdot \omega|^{-m} < c^{-m} \sum_{k\in\Z^d_0}\|k\|^{m\tau - l}.
\]
Finally, noting that whenever $l - m\tau > d$, the sum above converges, we have the desired bound. That is the weighted Birkhoff average converges as $T^{-m}$ provided that $h \in C^l$ with $l > d+m\tau$.
\end{proof}


\begin{remark}
The weight function, $g(t) \propto \sin^2(\pi t)$ on $[0,1]$, is in $\G_1$ but not 
$\G_2$ since then its second derivative is not continuous on the boundary. Nevertheless,
the integration-by-parts in the proof of the theorem can proceed up to $m = 2$, 
since the boundary terms vanish for $g$ and $g'$ and $g^{(2)} \in L^1$. 
Thus in this case when $h \in C^l$ and $l > d+2\tau$, 
the convergence is as $T^{-2}$. So for example, in \cite{Das17}, $d=2$, and the rotation vector 
$\omega = (\sqrt{2}-1,1)$ is Diophantine with $\tau = 1$. So the convergence is $m = 2$ whenever
$l >4$. They consider a case with $h \in C^\infty$ and numerically
observe a bit faster convergence, as $T^{-2.5}$.
\end{remark}

Following the work of \cite{kachurovskiiMaximumPointwiseRate2021}, super-polynomial convergence can be guaranteed under weaker hypothesis on the flow than those in \Th{superconvergence}, provided $h$ has a particular structure.
\begin{thm}\label{thm:general_superconvergence}
	Let $(M,\B,\mu)$ be a probability space and $\vphi_t:X\to X$be a smooth, ergodic flow with invariant probability measure $\mu$. Suppose that $h \in L^1(X,\mu) $ and there exists bounded funccton $H\in C^m(X,\R)$ such that 
\beq{cohomologous}
	h(x) = \langle h \rangle -\left.\frac{d^m}{dt^m} H(\vphi_t(x)) \right|_{t=0},
\eeq
for all $x\in \bar{M}$ where $\bar{M}$ is an invariant set of full measure.
Moreover, suppose the bump function $g \in \G_\infty$.
Then, there exists a constant $c_m$ such that, 
\[
	\left|\WB_T(h)(x) - \langle h \rangle \right| \leq \frac{c_m}{T^m}\quad \text{for all $x \in \bar{M}$.} 
\]
\end{thm}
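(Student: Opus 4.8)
The plan is to exploit the hypothesis \Eq{cohomologous} to integrate by parts $m$ times in the definition \Eq{WeightedBirkhoff}, transferring the $m$ time-derivatives off of $H$ and onto the weight function $g(t/T)$, each transfer producing a factor $T^{-1}$. First I would substitute \Eq{cohomologous} into the weighted average, writing
\[
	\WB_T(h)(x) = \langle h \rangle - \frac{1}{T}\int_0^T g(\tfrac{t}{T}) \left.\frac{d^m}{ds^m} H(\vphi_s(x))\right|_{s=t} dt,
\]
where I have used that the flow property $\vphi_s(\vphi_t(x)) = \vphi_{s+t}(x)$ lets one replace $\frac{d^m}{dt^m}H(\vphi_t(x))$ by the $m$-th derivative of the scalar function $t \mapsto H(\vphi_t(x))$ (this is the point where one checks that the ``evaluated at $t=0$'' in \Eq{cohomologous}, applied along the orbit, really is the $m$-th derivative of the orbit function; it follows from the group law and smoothness of the flow in $t$). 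Thus the error term is $\frac{1}{T}\int_0^T g(\tfrac{t}{T}) \psi^{(m)}(t)\,dt$ with $\psi(t) := H(\vphi_t(x))$, a $C^m$ function of $t$ that is bounded uniformly in $x$ because $H$ is bounded and, being $C^m$ on the compact-in-time orbit segment together with the smoothness of $\vphi$, has uniformly bounded derivatives up to order $m$ on any fixed time scale — though here one must be slightly careful, since the relevant bound is on $\psi^{(m)}$ itself, which is just $|h(\vphi_t(x)) - \langle h\rangle|$ and hence bounded by $\|h\|_\infty + |\langle h\rangle|$ if $h \in L^\infty$; more robustly one keeps the derivatives on $g$ as described next.

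Next I would integrate by parts $m$ times, moving derivatives from $\psi$ onto $g(t/T)$. Because $g \in \G_\infty$, all boundary terms vanish: at $t=0$ and $t=T$ every derivative $g^{(i)}(t/T)$ for $i=0,\dots,m-1$ is zero by \Eq{Gboundary}. Each integration by parts also extracts a factor $1/T$ from the chain rule $\frac{d}{dt} g(t/T) = \tfrac1T g'(t/T)$. After $m$ steps one obtains
\[
	\WB_T(h)(x) - \langle h \rangle = \frac{(-1)^{m+1}}{T^{m+1}} \int_0^T g^{(m)}(\tfrac{t}{T}) \, H(\vphi_t(x)) \, dt.
\]
Substituting $s = t/T$ gives
\[
	\left| \WB_T(h)(x) - \langle h \rangle \right| = \frac{1}{T^{m}} \left| \int_0^1 g^{(m)}(s) \, H(\vphi_{Ts}(x)) \, ds \right| \leq \frac{\|g^{(m)}\|_1 \, \|H\|_\infty}{T^m},
\]
so the claim holds with $c_m = \|g^{(m)}\|_1 \|H\|_\infty$, which is finite since $g^{(m)}$ is continuous on $[0,1]$ and $H$ is bounded; crucially this bound is uniform in $x \in \bar M$, as required.

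The main obstacle, and the step deserving the most care, is the first one: justifying that $\frac{d^m}{dt^m}\big|_{t=0} H(\vphi_t(x))$ evaluated along the orbit coincides with $\psi^{(m)}(t) = \frac{d^m}{ds^m}\big|_{s=0} H(\vphi_s(\vphi_t(x)))$, and that this function of $t$ is continuous so that the integration by parts is legitimate (one needs $\psi \in C^m$ in $t$, which requires joint smoothness of $(t,x)\mapsto\vphi_t(x)$ in $t$ — guaranteed since $\vphi$ is a smooth flow, i.e. at least $C^m$ in $t$ — together with $H \in C^m$). A secondary subtlety is that \Eq{cohomologous} is only assumed on the full-measure invariant set $\bar M$; since we want the estimate for \emph{all} $x \in \bar M$ and the orbit of such an $x$ stays in $\bar M$ by invariance, the identity holds all along the relevant trajectory, so no issue arises — but this is worth stating explicitly. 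Everything else is the routine repeated-integration-by-parts computation sketched above, structurally identical to the Fourier-mode estimate in the proof of \Th{superconvergence} but carried out directly on the orbit rather than mode-by-mode.
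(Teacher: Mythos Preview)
Your proposal is correct and follows essentially the same argument as the paper: substitute \Eq{cohomologous} along the orbit using the flow's group law, integrate by parts $m$ times so that all derivatives land on $g$ (boundary terms vanishing by $g\in\G_\infty$), and bound the resulting integral by $\|g^{(m)}\|_1\|H\|_\infty/T^m$, yielding exactly the constant $c_m = \|g^{(m)}\|_1\|H\|_\infty$. The only cosmetic difference is that the paper first rescales $t = Ts$ and then integrates by parts in $s$, whereas you integrate by parts in $t$ and rescale afterwards; your extra care about why the semigroup property and invariance of $\bar M$ make the substitution legitimate is well placed and matches the paper's first step.
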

\begin{proof}
	As $\bar{M}$ is invariant, the semi-group property of $\vphi_t$ guarantees, for all $x\in \bar{M},\, \tau\in \R$, 
	\[
		h( \vphi_\tau(x)) = \langle h \rangle -\left. \frac{d^m}{dt^m} H (\vphi_{t+\tau} (x)) \right|_{t=0} = \langle h \rangle -\left. \frac{d^m}{dt^m} H (\vphi_{t} (x)) \right|_{t=\tau}.
	\]
	Using this relation we can rewrite the weighted Birkhoff average as,
	\begin{align*}
		\left|\WB_T(h)(x)-\langle h\rangle\right| &= \left| \frac{1}{T} \int_0^T g\left(\frac{t}{T}\right) h(\vphi_t(x))\, dt - \langle h \rangle\right| \\ 
			&= \left| \int_0^1 g(s) h( \vphi_{Ts}(x))\, ds - \langle h \rangle \right| \\
			&= \left| \int_0^1 g(s) \left. \frac{d^m}{dt^m} H (\vphi_{t}) (x) \right|_{t=sT}\, ds \right|. \\
	\end{align*}
	Finally, integrating by parts $m$ times and noting that the boundary terms vanish by \Eq{Gboundary} shows that, for all $x\in \bar{M}$,
	\begin{align*}
		\left|\WB_T(h)(x) - \langle h \rangle \right| &= \frac{1}{T^m}\left| \int_0^1 g^{(m)}(s) H (\vphi_{sT} (x))\, ds \right| \\
			&\leq \frac{1}{T^{m}} ||g^{(m)}||_1 ||H||_\infty. 
	\end{align*}
	Since $||g^{(m)}||_1 < \infty$ and $||H||_\infty < \infty$ by assumption, we can take $c_m = ||g^{(m)}||_1 ||H||_\infty $ to give the result. 
\end{proof}

\begin{remark}
	The condition \Eq{cohomologous} is equivalent to a function being `cohomologous' to it's average \cite{kachurovskiiMaximumPointwiseRate2021}, with some further regularity assumed. It is difficult to ascertain whether, for a given function $h$, there exists a function $H$. This is due to the fact that finding $H$ in \Eq{cohomologous} requires knowledge of both the function $h$ and the flow $\vphi_t$. Numerical observations in \cref{sec:applications} imply that for a chaotic orbit super-polynomial convergence is not observed. This indicates it is impossible to find the desired $H$ for a chaotic orbit. 
\end{remark}

\begin{remark}
	\Th{superconvergence} can be obtained as a corollary of \Th{general_superconvergence}. As in the proof of \Th{superconvergence}, assume $h \in \L^2(\T^n)$ and assume coordinates $\theta\in\T^n$ is taken so that $\vphi_t(\theta) = \theta + t\omega$. Denoting the Fourier series for $h$ by $h = \sum_{k\in \Z^d} a_k e^{2\pi i k \cdot\theta}$, we can then set
\[ 
	H = \sum_{k\in\Z^d} \frac{a_k}{(2\pi i k\cdot \omega)^m} e^{2\pi i k\cdot \theta }. 
\]
Then, for each $m$, $h$ is of the form \Eq{cohomologous}. Note that as $m$ grows, $H$ is bounded provided $\omega$ is Diophantine.
\end{remark}
\section{Weighted Birkhoff Average as a Test for Chaos}\label{sec:WBA_Test}

The Weighted Birkhoff average can be used as a test for chaos by examining the convergence rate of $\WB_T(h)(x)$ to $\langle h \rangle$ as $T \to \infty$, for a given function $h$. As discussed in \Sec{SuperConvergence}, if the an orbit is quasiperiodic with Diophantine rotation vector, the weighted Birkhoff average is super-convergent. By contrast, it is observed that the convergence rate for chaotic orbits is much slower. 

Following \cite{Sander20,Meiss21} we estimate the convergence rate by comparing the values of \Eq{WeightedBirkhoff} for successive time intervals of a fixed length. That is, an estimate for the accuracy of \Eq{WeightedBirkhoff} for a time $T$ is found by first computing $\WB_T(h)(x_0)$, and then $\WB_T(h)(x_T)$, the average along a second time $T$ segment that begins at $x_T = \vphi_T(x_0)$. A comparison of these values gives an estimate of the `error' in the time $T$ average relative to the true average for the initial point $x_0$.

We consider two primary options to quantify this error. The first,
\beq{absdig}
	\absdig_T(h)(x_0) \equiv -\log_{10}|\WB_T(h)(x_0) - \WB_T(h)(x_T)|,
\eeq
 we call the \textit{absolute digit accuracy}.
This is the measure proposed in \cite{Sander20, Meiss21}, where it was denoted simply by $\dig_T$. Equation~\Eq{absdig} is the number of digits that two segments of the average have in common. It should be useful when the expected averages are of the same magnitude.
A second error quantification is the \textit{relative digit accuracy},
\beq{reldig}
	\reldig_T(h)(x_0) \equiv -\log_{10}\frac{\left| \WB_T(h)(x_0) - \WB_T(h)(x_T)\right|}{\tfrac12 (|\WB_T(h)(x_0)| + |\WB_T(h)(x_T)|)}.
\eeq
This measures the number of digits relative to an expected value, estimated as the mean absolute value of the two partial averages. This should be useful when the average of $h$ varies widely in magnitude as $x_0$ varies. Note, however, that it is not a good quantification if the average is expected to be zero,
in which case \Eq{absdig} is more appropriate. Finally, we define the \textit{maximum digit accuracy}
\beq{maxdig}
	 \dig_T \equiv \max\{\absdig_T,\reldig_T\} .
\eeq

By comparing the values of these measures for different initial conditions, we can differentiate between orbits for which the averages converge more rapidly than others and hence, classify which orbits are chaotic and which are regular.

\section{Applications} \label{sec:applications}

In this section we apply the method outlined above to three example flows: the two-wave model \cite{Escande81}, a quasi-periodically forced vector field with a strange attractor \cite{Romeiras87}, and a model for magnetic fields investigated in \cite{paulHeatConductionIrregular2022}.

To numerically integrate these examples, we use the algorithm \textsc{Vern9}, in the Julia package \textsc{DifferentialEquation.jl} \cite{rackauckas2017differentialequations}. Each integration is computed with multiple-precision arithmetic, choosing absolute and relative tolerances between $10^{-10}$ and $10^{-15}$, depending on the differential equation. The WBA in \Eq{WeightedBirkhoff} was calculated by numerically solving the differential equation 
\beq{WBA_DiffEq}
	\frac{d}{dt} W(t) = g\left(\tfrac{t}{T}\right) h(\vphi_{t}(x_0)),
\eeq
where $g$ is the bump function \Eq{Bump}. Integration of \Eq{WBA_DiffEq} is simply done by adding it to the set of differential equations defining the flow $\vphi_{t}$. Thus \Eq{WBA_DiffEq} is integrated with the same algorithm as the trajectory. Finally we compute the time $T$ weighted Birkhoff average as $ \frac{1}{T} W(T) $.
This allows us to adjust the accuracy of the integrator so that the computation of \Eq{WeightedBirkhoff} has an accuracy comparable to the trajectory itself; indeed, the \textsc{Vern9} algorithm will adjust its step-size to ensure this accuracy.


\subsection{Two-Wave Model}\label{sec:DoubleWave}\label{sec:twoWaveModel}

Our first example corresponds to the 1D motion of a charged particle in the electric field two longitudinal electrostatic waves \cite{Escande81}. This \textit{two-wave model} was used in \cite{MacKay89a, Duignan20} to illustrate the so-called \textit{converse KAM} method that detects the breakup of tori. Since the destruction of tori is a signal of the onset of chaos, we can use this model to compare the efficiency of the weighted Birkhoff and converse KAM methods as chaos detectors.


Following \cite{Escande81}, the two-wave system has the nonautonomous Hamiltonian 
\beq{TwoWaveHam}
	H(q,p,t) = \tfrac12 p^2 - \mu \cos(2\pi q) - \mu \cos(2\pi (q-t)),
\eeq
for the position $q$ and momentum $p$ of the particle.
Here, without loss of generality, we choose the mass of the particle to be one, the phase velocities of the two waves to be zero and one, respectively, and the wavenumber of the first wave to be one.
For simplicity, we follow \cite{MacKay89a,Duignan20} to assume that the wavenumber of the second wave is also one and that that the two waves have the same amplitude, $\mu$. Thus this simplified model has only one parameter.
By taking $q$ and $t \mod 1$, the extended phase space can be thought of as 
$(q,p,t) \in \T \times \R \times \T$. 

A Poincar\'{e} section at $t = 0 \mod 1$ is shown in
\Fig{twoWavePoincare} for $\mu = 0.03$. 
When $\mu \ll 1$, most orbits lie on rotational, invariant 2D tori
(i.e., tori that are homotopic to the set $p=0$).
For small positive $\mu$, there are two primary
elliptic periodic orbits that cross the Poincar\'e section near $(0,0)$ and $(0,1)$ and two
hyperbolic periodic orbits crossing the section near $(\tfrac12,0),(\tfrac12,1)$.
Each of these four orbits correspond to fixed points of the Poincar\'e map.
The section for the range $p \in [-0.2,0.5]$ shown in \Fig{twoWavePoincare} shows only orbits
trapped in the stationary wave, near $p=0$. The 2D tori encircling the primary elliptic orbits
are \textit{librational}; they correspond to particles trapped in one of the two electrostatic waves. 
Also shown in \Fig{twoWavePoincare} are other resonant islands; these correspond to orbits trapped
near elliptic periodic orbits with rational winding numbers on $\T^2$. The largest seen in the
figure is a pair of islands surrounding a period-two orbit near $p = 0.5$ on the section.

\begin{figure}[ht]
	\centering
	\includegraphics[width = 0.8\linewidth]{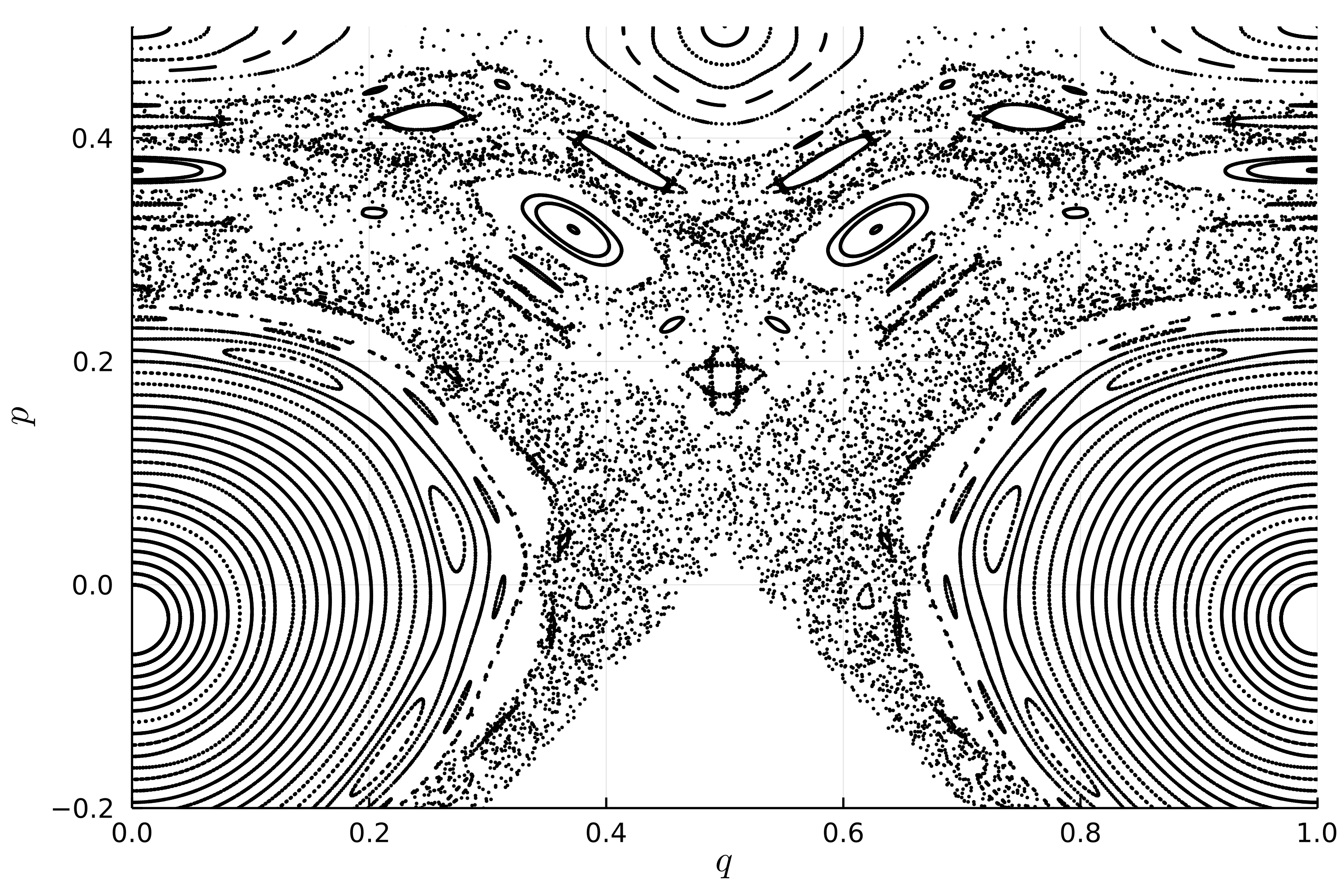}
	\caption{\footnotesize Poincar\'{e} section for \Eq{TwoWaveHam} at $t = 0 \mod 1$ 
	$\mu = 0.03$.}
	\label{fig:twoWavePoincare}
\end{figure}

\subsubsection{Distinguishing Regular and Chaotic Orbits}
To demonstrate the difference in convergence of the WBA between chaotic and regular orbits we consider the weighted Birkhoff average of the function $h(q,p,t) = p$. Note that the average of this function for any quasi-periodic orbit will be $\rho$, the rotation number of the orbit.
\beq{RotNum}
	\rho = \lim_{T \to \infty} \frac{q(T)-q(0)}{T} 
	     = \lim_{T \to \infty}\frac{1}{T} \int_0^T p(\tau)d\tau 
	     = \langle p \rangle,
\eeq
taking the lift of the coordinate $q$ to $\R$.

Figure~\ref{fig:digVST_DW} shows the maximum digit accuracy \Eq{maxdig} as a function of $T$ for two initial conditions,
$(q_0,p_0) = (0,0.45)$ and $(0,0.3)$. As can be seen in \Fig{twoWavePoincare}, the first orbit lies
 on a librational torus in a period-two island, while the second appears to be chaotic. 
For the regular orbit, the WBA appears to converge to ten digits by $T=1000$,
and $\dig_T$ indicates double precision accuracy of $\langle p \rangle$ by $T= 2000$. Since the lower bound of $\dig_T$ increases
linearly with $T$, the convergence appears to be exponential. By contrast, $\dig_T$ fluctuates
around $2$ for the second, chaotic orbit. A similar dichotomy was also see for maps in \cite{Sander20}.

\begin{figure}[ht]
	\centering
	\includegraphics[width=0.8\linewidth]{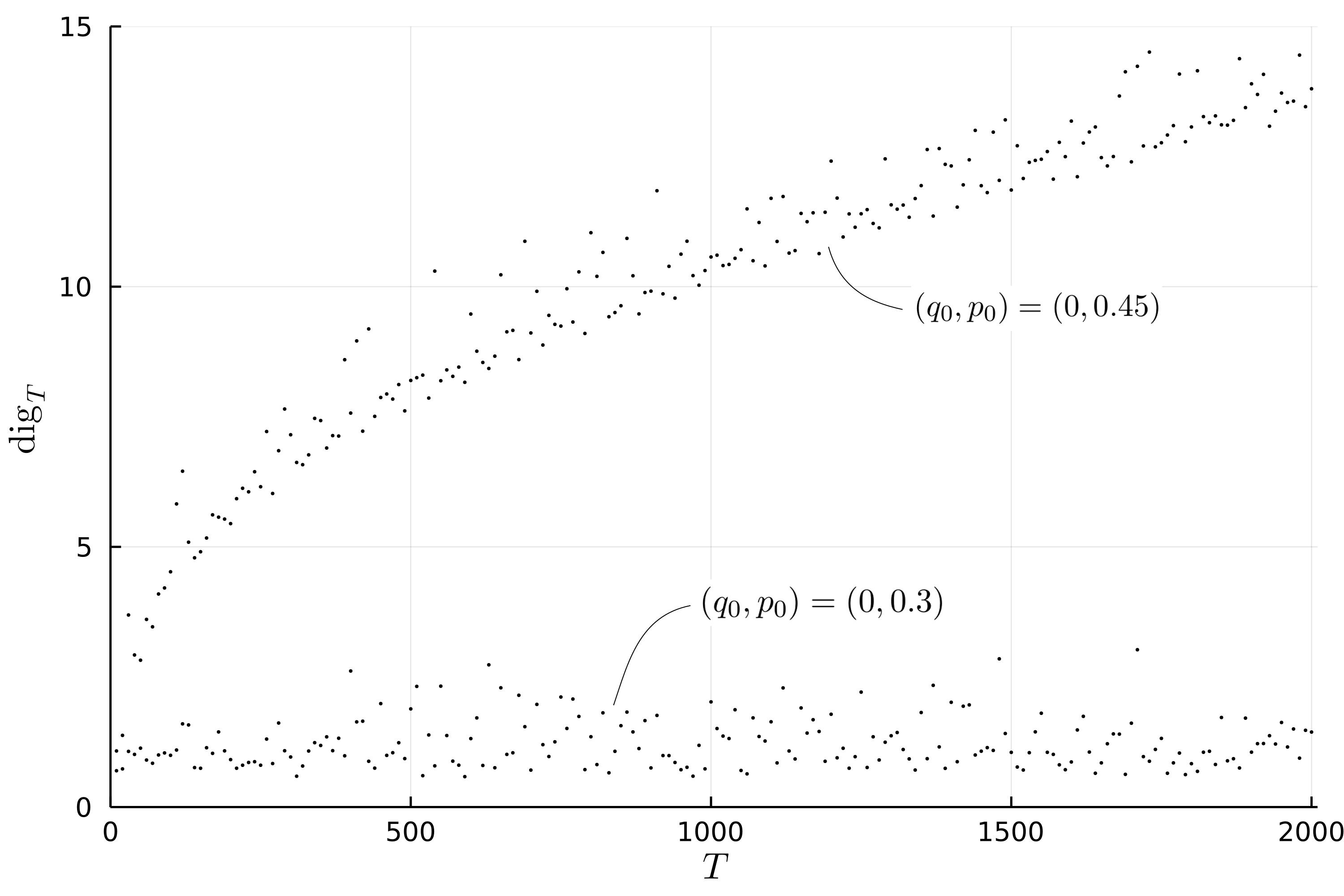}
	\caption{\footnotesize The maximum digit accuracy $\dig_T$ vs $T$ using the function $h(q,p,t) = p$ for two orbits 
	of \Eq{TwoWaveHam} with $\mu = 0.3$. The orbit with initial condition $(q,p) = (0,0.45)$ is regular 
	and that with initial condition $(q,p)=(0,0.3)$ is chaotic.}
	\label{fig:digVST_DW}
\end{figure}

To reinforce the distinction between convergence rates for chaotic and regular orbits, \Fig{PoincarePlotContinuous_10000} shows
a heat map of the maximum digit accuracy for $T= 1000$ for $501$ initial conditions $(0,p_0,0)$, $0 \le p_0 \le 0.5$ on the Poincar\'{e} section of \Fig{twoWavePoincare}. Note that the regular orbits in the low-period islands have $\dig_T \gtrsim 10$, while the strongly chaotic orbits outside these islands have $\dig_T \lesssim 3$. Hence, the colors show that there is a clear distinction between the regular and chaotic orbits in \Fig{twoWavePoincare}.
 
\begin{figure}[ht]
	\centering
	\includegraphics[width = \linewidth]{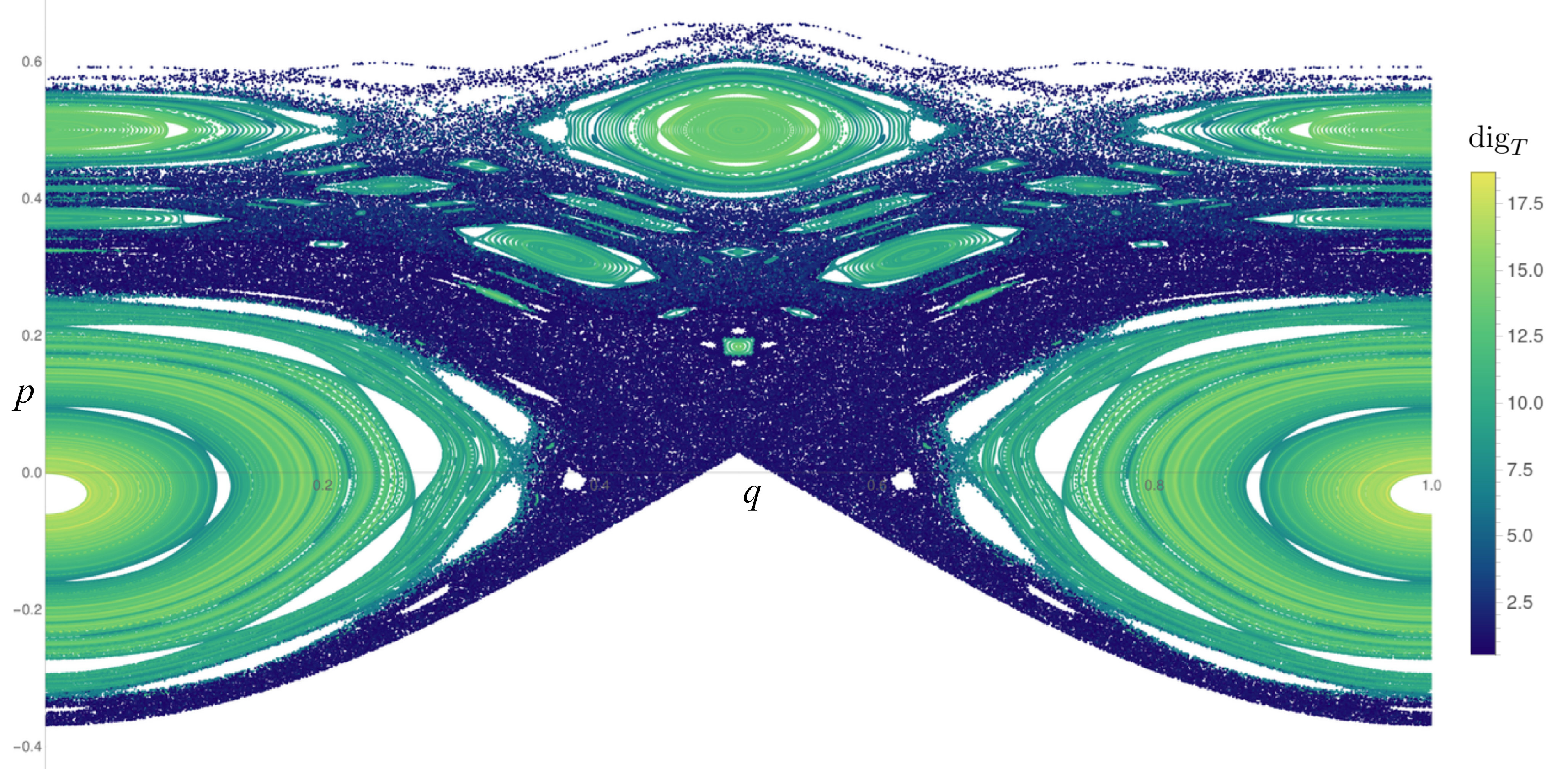}
	\caption{\footnotesize A Poincar\'e section of \Eq{TwoWaveHam} for $\mu = 0.03$.
	The $500$ orbits have initial conditions $(0,p_0,0)$, $p_0 \in [0,0.5]$.
	The colors, as shown in the color bar, represent $\dig_T$ for $T= 1000$ and $h = p$.}
	\label{fig:PoincarePlotContinuous_10000}
\end{figure}

\subsubsection{Comparisons of Relative and Absolute Accuracy}

\Fig{digVSp0_DW} shows the results of computations of the criteria \Eq{absdig} and \Eq{reldig}
for the same set of initial conditions as \Fig{PoincarePlotContinuous_10000}.
Panel (a) shows that $\reldig_T$ performs poorly for the regular, librational tori around $(0,0)$. This is expected since $\langle p \rangle = 0$ for these orbits, so that the denominator of \Eq{reldig} is near zero. However, $\reldig_T$ nears machine precision for the orbits that are trapped in the period-two island chain, near $p = 0.5$.

Panel(b) shows that $\absdig_T$ clearly distinguishes between the regular, island-trapped orbits and the chaotic orbits that were seen in \Fig{twoWavePoincare}. The initial momenta corresponding to chaotic orbits have $\absdig_T \lesssim 3$, while the regular orbits have $\absdig_T \gtrsim 7$, and there are only five orbits with $4 < \dig_T < 6$.
A plot of $\dig_T$, not shown would be identical to panel (b): for this case $\dig_T = \absdig_T$ for all orbits. 

For the two-wave model, the results in \Fig{digVSp0_DW} indicate a threshold for distinguishing chaotic and regular orbits
\beq{ChaosCriterion}
	\dig_{1000} < 5, \qquad \mbox{(chaos criterion)},
\eeq
for the function $h = p$.

\InsertFigTwo{digVSp0_rel_T=1000_mod}{digVSp0_abs_T=1000_mod}
{Relative and absolute digit accuracy as a function of $p_0$ for 
initial conditions along the line $q_0 = 0$. In each case $T=1000$ and $h=p$.}
{digVSp0_DW}{0.48}

A curiosity of \Fig{digVSp0_DW}(b) is the decrease in $\absdig_T$ near $p_0 = 0.1$, even though most orbits trapped in the period-one island appear to be regular. As can be seen in \Fig{PoincarePlotContinuous_10000}, this dip corresponds to the region near to a hyperbolic period-six
orbit on the section that starts near $(0,0.1)$. These regular orbits lie just outside the separatrix of this island chain and thus take a long time to complete a full rotation---indeed this time would go to infinity for the integrable case as the orbit nears the separatrix. Moreover, the librational rotation number of these orbits around the elliptic point approaches the rational that corresponds to that of the hyperbolic orbit. Consequently such orbits will explore a smaller fraction of an invariant torus over some finite time than those with initial condition further away. The result is a less-accurate, finite-time approximation of the space average of $h$.

\InsertFig{WBAVSp0_T=1000_mod}
{The computed rotation number $\rho \approx \WB_T(p)$ for initial conditions $(0,p_0,0)$ as a function of $p_0$. As above this is for \Eq{TwoWaveHam} with $\mu = 0.03$.}
{WBA_DW}{0.7}

The computed rotation number \Eq{RotNum} as a function of $p_0$ is shown in \Fig{WBA_DW}. In the region of librational tori near $(0,0)$, $\rho=0$. Since most of these orbits are regular $\rho$ is computed with high accuracy. Indeed, as we saw in \Fig{digVSp0_DW}, $\absdig_T>5$ for all $p_0 \in [0,0.251]$.
The rapid fluctuations in $\rho$ as a function of initial condition near $p_0 = 0.3$ reflect the poor convergence of the WBA for these chaotic orbits. Additional regular regions appear for
higher period islands around elliptic periodic orbits; these have constant, rational rotation number. 
The chaotic regions between pairs of neighboring islands give the scattered values between the flat intervals.

\subsubsection{Varying the Width of the Bump Function}

Another choice worth investigating is that of the weight function $g:\R\to[0,\infty)$ in \Eq{WeightedBirkhoff}. \Th{superconvergence} implies that super-convergence follows whenever $g$ is $C^\infty$ and flat at $0$ and $1$. In this section we continue to use \Eq{Bump}, but vary it slightly by adding a width parameter, $w>0$:
\beq{WidthModG}
	g_{w}(s) = 
	\begin{cases}
		C \exp\left( \frac{-w}{s(1-s)}\right), & s\in (0,1) \\
		0, 		& s\leq 0 \mbox{ or } s \geq 1.
	\end{cases}
\eeq
Again, $C$ is chosen so that $g_{w}$ has the normalization \Eq{Normalization}.
The resulting function is shown for several values of $w$ in \Fig{WeightedBump}. 
If $w \ll 1$ then $g_w$ is near its maximum over a large fraction of $[0,1]$, and the average
limits to the unweighted, time-$T$ average. If $w \gg 1$ then $g_w$ is essentially zero except
for a small interval. Neither of these cases would seem to be desirable. But what intermediate value of $w$ is best?

\InsertFig{WeightedBump}
{The weighted bump function \Eq{WidthModG} for five values of $w$}
{WeightedBump}{0.4}

Figure~\ref{fig:VaryWidths}, shows how $\dig_T$ depends on $w$ for two different $T$,
for a regular trajectory of the two-wave model.
Interestingly, these curves have local spikes indicating improved convergence for nearly isolated values of $w$. 
A possible reason is that these choices ensure that the heavily weighted portion in the average corresponds
to an interval width that is approximately an integer multiple of the rotation number
for this orbit. Since, as seen in the figure, the value of $w$ for these spikes changes with $T$,
and would also change with rotation number of the torus, it is hard to argue that such a 
choice for $w$ would be optimal. In any case, $w = 1$ seems to be a reasonable choice, 
since---if we ignore the spikes---the accuracy has a local maximum near this point. 

\InsertFigTwo{digVSw0_labelled_mod}{digVSw0_30_labelled_mod}
{The variation of $\dig_T(h)(z)$ with $w$, the width parameter of \Eq{WidthModG}. The system is \Eq{TwoWaveHam} with $\mu = 0.03$, $h = p$, and initial condition $(q_0,p_0) = (0,0.1)$. (a) $w \in [0,1]$ and (b) $ w \in [1,50]$.}
{VaryWidths}{0.5}

\subsection{A Quasiperiodically Forced System}\label{sec:QPendulum}

As \Th{superconvergence} showed, the weighted Birkhoff average is super-convergent for a
quasiperiodic orbit with Diophantine rotation vector. When the dynamics is a conjugate to rigid rotation,
then there is an invariant measure on the torus. More generally, if there is no invariant measure,
then the Birkhoff ergodic theorem does not apply. In this case it is not clear whether the accuracy
of the weighted Birkhoff average would be able to distinguish between regular and non-regular orbits.

In this subsection we study the quasiperiodically forced and damped pendulum model of \cite{Grebogi84, Romeiras87}:
\beq{QPODEs}
	\begin{aligned}
		\dot{\theta} &= p ,\\
		\dot{\psi}_1 &= \gamma ,\\
		\dot{\psi}_2 &= 1 ,\\
		\dot{p}   &= -\nu p + a \cos(2\pi \theta) + b + c (\cos(2\pi \psi_1) + \cos(2\pi \psi_2)).
	\end{aligned}
\eeq
Here $\gamma \in \R\setminus \Q$ is irrational and $a,b,c,\nu \in \R$ are parameters.
Grebogi et al.~\cite{Grebogi84} observed that this system can have a geometrically
strange attractor with no positive Lyapunov exponents, a situation that they call a
\textit{strange, nonchaotic attractor}. Even though such a system may be thought of as nonchaotic because nearby
orbits do not separate exponentially, the dynamics may still
exhibit the weaker, topological form of sensitive dependence \cite{Glendinning06}.

Formally the phase space for \Eq{QPODEs} is $\T^3 \times \R$, with coordinates $(\theta,\psi_1,\psi_2,p)$.
A natural 3D Poincar\'e section is $\psi_2 = 0 \mod 1$.
Following \cite{Romeiras87} we take 
\beq{QPValues}
	\nu = a = 6\pi, \quad c = 0.55\nu, \quad \gamma = \tfrac12(-1+\sqrt{5})
\eeq
leaving one free parameter, $b$.\footnote
{In \cite{Romeiras87} the parameters are $\nu = a = 2\pi p$, $b = 2\pi K p$, and $c=2\pi V p$, where $p$ is a damping parameter. The case $p=3$, $K=1.33$, and $V=0.55$ of \cite[Fig.~5]{Romeiras87} corresponds to \Eq{QPValues} with $b = 1.33\nu$.}

Six examples of attractors for \Eq{QPODEs} are shown in \Fig{QPSections}. These figures are projections onto $(\theta,p)$ of the Poincar\'e section $\psi_1 = 0 \mod 1$. On the 3D section the attractor is sometimes a two-torus, sometimes geometrically strange with dimension between two and three, and sometimes fully 3D. For example, when $b = 1.33 \nu$ the system has a geometrically strange attractor with a box-counting dimension larger than $1$, but no positive Lyapunov exponents: in the $(\theta,p)$ subspace the maximal exponent is $\lambda = -0.45$ \cite{Romeiras87}.
The maximal Lyapunov exponent reaches $0$ at about $b = 1.66\nu$ where the attractor in the Poincar\'e section appears to be 3D, see the final panel of \Fig{QPSections}.
The attractor collapses back to a two-torus as $b$ nears $1.8\nu$ (not shown).

\InsertFigSix{QPk08_2D}{QPk0829_2D}{QPk11_2D}{QPk133_2D}{QPk134_2D}{QPk177_2D}
{Projections onto $(\theta, p)$ of $5\times 10^4$ points on 
	the 3D Poincar\'e section $\psi_2 = 0 \mod 1$ of the system \Eq{QPODEs}
	using \Eq{QPValues}. The six panels show 
	$b = [0.8,0.829,1.1,1.33,1.34,1.77]\nu$, respectively.
	The maximal Lyapunov exponent in the $(q,p)$ subspace is negative, except for the last case, where 
	attractor appears to be 3D on the Poincar\'e section.}
	{QPSections}{2.5 in}

\subsubsection{Distinguishing Strange Attractors}

The different geometric structures shown in \Fig{QPSections} make this system a prime candidate
for investigating whether weighted Birkhoff averaging can be used to distinguish between regular
and strange nonchaotic attractors.
Figure~\ref{fig:digVST_QP} shows $\dig_T$ as a function of $T$ for values of $b$ that 
correspond two-torus, strange, and 3D attractors, respectively.

\begin{figure}[ht]
	\centering
	\includegraphics[width=0.8\linewidth]{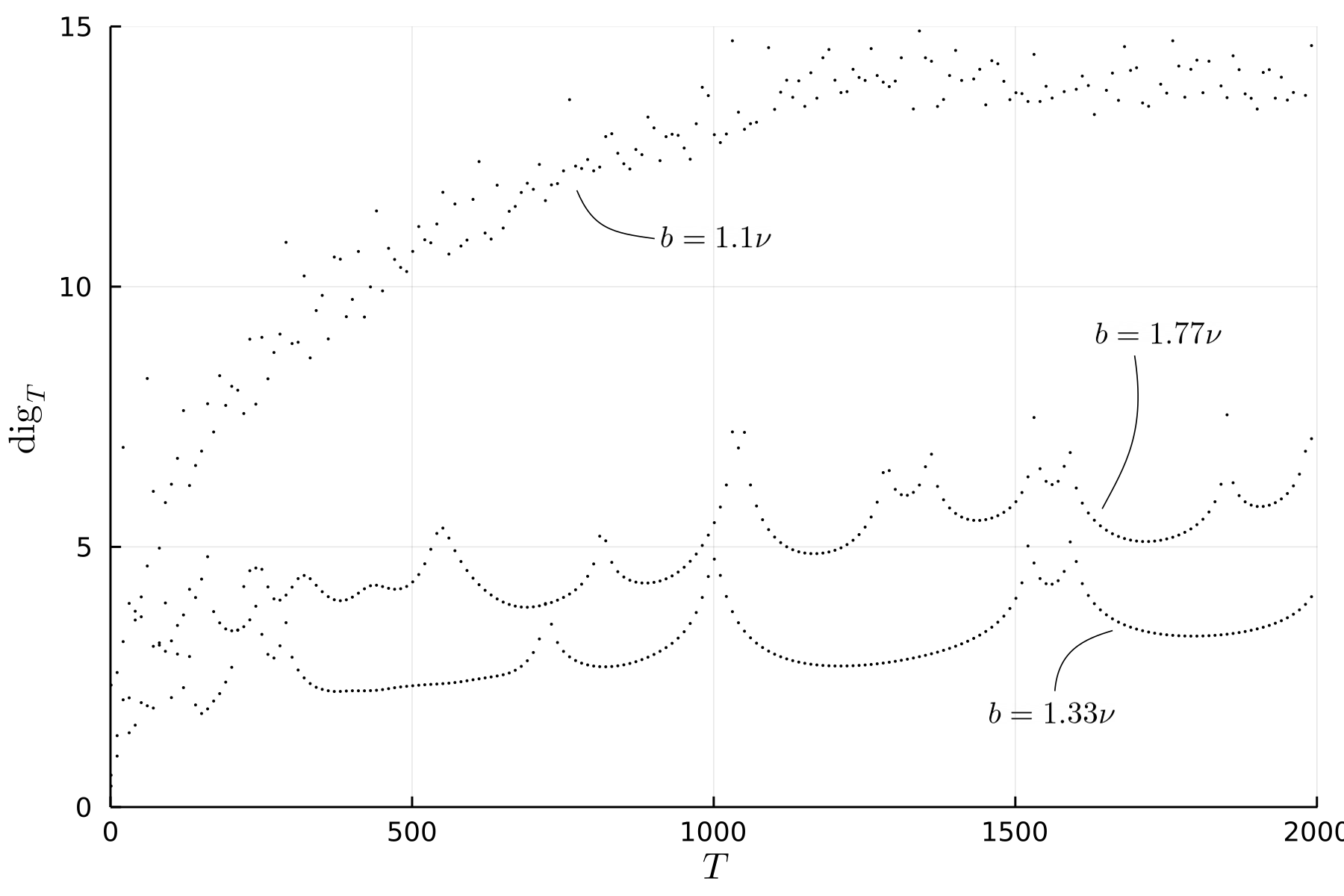}
	\caption{\footnotesize The digit accuracy for the system \Eq{QPODEs} with parameters \Eq{QPValues} and $b = 1.1\nu$, $1.33\nu$, and $1.77\nu$ as a function of $T$ using $h = p$. The initial condition is $(\theta,\psi_1,\psi_2,p)=(0,0,0,2)$.}
	\label{fig:digVST_QP}
\end{figure}

When $b = 1.1\nu$, the WBA appears to be super-convergent. The maximum digit accuracy reaches $13$ by $T=1200$ and then remains nearly constant; this is consistent with the accuracy of the numerical integration that was set to $10^{-13}$ for both absolute and relative error.
This suggests that the dynamics of this orbit are conjugate to a Diophantine rigid rotation. 
When $b = 1.33\nu$, where the attractor is strange but nonchaotic, the convergence of the WBA in \Fig{digVST_QP}
is observed to be poor: it only reaches $3$ by $T=1200$.
The convergence is also poor when $b= 1.77\nu$, where the attractor is 3D.
Even though $dig_T$ is larger than the previous case, it only reaches $5$ when $T = 1200$, and
in both cases the WBA appears to converge---at best---at a polynomial rate in $T$. 
Even if this attractor is simply a three-torus, the relatively poor convergence suggests that
its dynamics are more complex than rigid rotation.
Thus the weighted Birkhoff average effectively distinguishes between a two-torus attractor,
and more complex or higher dimensional attractors.

\subsubsection{Finding Two-Tori}
We now look at how the accuracy of the WBA varies with $b$ in order to distinguish between 
two-torus and strange or 3D attractors.
Figure \ref{fig:digVSb_QP} shows $\dig_T $ as a function of $b \in [0.6,1.8]\nu$.
The figure shows a clear stratification into three levels; the highest corresponds to $13 < \dig_T <18$. 
This high accuracy occurs, for example, for the case $b = 1.1\nu$ and $1.34\nu$ shown in 
\Fig{QPSections} that are clearly two-tori; these are the blue points in \Fig{digVSb_QP}.
The highest accuracy, $\dig_T \sim 17$, occurs near $b = 0.6\nu$. The attractors in this case
(not shown) are even simpler: they resemble librating orbits of the pendulum in $(\theta, p)$
that are simply extended in the $\psi_1$ direction. 

The lowest level in \Fig{digVSb_QP} are those $b$ values with $\dig_T \sim 4$. The two red points correspond to the values $b = 0.8\nu$, and $1.33\nu$, the strange attractors shown in \Fig{QPSections}. 

The mid-level range, $\dig_T \sim 8$, for \Fig{digVSb_QP} corresponds to geometrically more complex attractors that are nevertheless, not strange. For example, the green points in the figure, represent the values $b = 0.829\nu$ and $1.77\nu$ shown in \Fig{QPSections}. The first appears to be the projection of a two-torus, however, it is geometrically more complex than those tori that have higher values of $\dig_T$. The second green point corresponds to the 3D attractor in \Fig{QPSections}. The rapid increase in $\dig_T$ as $b$ increases beyond $1.77\nu$ in \Fig{digVST_QP} signals the collapse of 3D attractor; by $b=1.8\nu$, it has become a two-torus similar to that at $1.1\nu$ though without the loop seen in
\Fig{QPSections}.

\InsertFig{QuasiPeriodic_digVSb}
{The maximum digit accuracy for the system \Eq{QPODEs} with parameters \Eq{QPValues} for
	a grid of $1200$ values of $b \in [0.6,1.8]\nu$.
	Here $T = 1500$ and $h = p$. 
	The red points correspond to $b = 0.8\nu$ and $1.33\nu$, blue to $b= 1.1\nu$, $1.34\nu$,
	and green to $0.829\nu$ and $1.77\nu$. The attractors for these $b$ values were shown in \Fig{QPSections}.}
	{digVSb_QP}{0.7}

The the rotation number of $\theta$:
\[
	\rho = \lim_{n\to\infty} \frac{\theta(n)-\theta(0)}{n},
\]
is shown in \Fig{WBA_QP}. 
The figure is similar to the Devil's staircase shown in \cite[Fig. 8b]{Romeiras87},
however the weighted Birkhoff average for the function $h = p$ provides a much more accurate computation.
The flat sections in $\rho$ correspond to the two-torus attractors with $\dig_T \sim 14$,
the highest level in \Fig{digVSb_QP}. This figure shows almost no scatter when compared
with the corresponding plot for two-wave model, \Fig{WBA_DW}.

\InsertFig{QuasiPeriodic_wbaVSb}
{The rotation number of $\theta$ for \Eq{QPODEs} for the same parameters as \Fig{digVSb_QP}
computed using the WBA for $h=p$.}
{WBA_QP}{0.7}

\subsection{Magnetic Field Line Flow}\label{sec:FieldLines}

As a final example we consider a family of model magnetic fields studied in \cite{paulHeatConductionIrregular2022}.
Here the domain is the solid torus $D^2 \times \bS^1$, where $ (\psi,\theta) \in [0,1] \times \bS^1 $ are
polar coordinates on the disk $D^2$ and $\zeta$ is the toroidal angle on $\bS^1$.
The fields are generated from the vector potential $A = \psi \nabla \theta - \chi\nabla \zeta$ with
\[ 
	\chi(\psi, \theta, \zeta) = \tfrac12 \psi^2 - 
			\sum_{m,n \in \Z} \eps_{m,n} \psi(\psi-1) \cos\left(2\pi(m\theta - n\zeta)\right),
\]
This gives the magnetic field
\beq{magneticFields}
	B = \nabla \times A = \nabla \psi \times \nabla \theta - \nabla \chi(\psi,\theta,\zeta) \times \nabla \zeta.
\eeq 
The field line of $B$ for this case can also be thought of as the flow of $\chi$ as a nonautonomous
Hamiltonian using $(\theta,\psi)$ as canonical variables and $\zeta$ as ``time''.
These are the solutions to
\beq{FareyFields}
	\begin{aligned}
		\dot{\psi} 	&= -2\pi\sum_{m,n} m \eps_{m,n} \psi(\psi - 1)\sin(2\pi (m\theta - n \zeta)) ,\\
		\dot{\theta}&= \psi - \sum_{m,n} \eps_{m,n}(2\psi - 1) \cos(2\pi(m\theta - n\zeta)) ,\\
		\dot{\zeta} &= 1. 
	\end{aligned}
\eeq
Note that this system has invariant two-tori at $\psi = 0$ and $\psi = 1$. Moreover, when all the amplitudes $\eps_{m,n} = 0$ the system is completely integrable, since $\psi$ is then invariant. More generally, each $m,n$ Fourier mode creates a resonant magnetic island near $\psi = \tfrac{n}{m}$ with amplitude $\eps_{m,n}$.

In \cite{paulHeatConductionIrregular2022}, a set of resonances with fixed $m$ and a range of $n$ values are studied. 
To emphasize the creation of higher-order islands by resonant beating, we instead use a set of Fourier modes that
correspond to resonances \textit{up to} a given level on the Farey tree \cite{meissSymplecticMapsVariational1992}. In particular, we take $(m,n)$ corresponding to the resonances up to level three on the Farey tree with root $(\tfrac01, \tfrac11)$, namely,
\beq{FareySet} 
	(m,n) \in \{(4,1),(3,1),(5,2),(2,1),(5,3),(3,2),(4,3)\}. 
\eeq
Note that the Farey tree naturally generates a set of coprime $(m,n)$ pairs.

We will choose a one-parameter family of the amplitudes so that there is a critical value at
which the Chirikov overlap criterion \cite{meissSymplecticMapsVariational1992} is simultaneously satisfied for each
neighboring pair. The approximate island half-width in $\psi$
for a single Fourier mode in \Eq{FareyFields} can be obtained by neglecting the $\cO(\eps)$ term in
the $\dot{\theta}$ equation. Thus if $\eps_{m,n} \ll 1$ the system is effectively a pendulum in the variables 
$(m\theta-n\zeta, m\psi-n)$. This gives a resonance at $\psi = \tfrac{n}{m}$ with the half-width
\[
	\Delta_{m,n} = 2 \sqrt{\eps_{m,n} \tfrac{n}{m}\left(1-\tfrac{n}{m}\right)} .
\]
Two neighboring resonant islands on the Farey tree then overlap when
\[
	\Delta_{m_1,n_1} + \Delta_{m_2,n_2} = \left|\frac{n_1}{m_1} - \frac{n_2}{m_2}\right| = \frac{1}{m_1 m_2} .
\]
Here the last equality above follows because the two modes are Farey neighbors. 
If we scale the amplitudes as
\beq{FareyFieldParams} 
 	(\eps_{4,1},\,\eps_{3,1},\,\eps_{5,2},\,\eps_{2,1},\,\eps_{5,3},\,\eps_{3,2},\eps_{4,3}) = 
	\frac{\eps}{21600} \left(72,27,25,96,25,27,72\right)
\eeq
then the resonances simultaneously overlap at $\eps = 1$. For this value the system should be 
chaotic, in the sense that the rotational tori between each pair of islands are destroyed. 
Of course, as is well known, the overlap criterion overestimates the \textit{critical} value for the destruction of the KAM tori \cite{Meiss17a}.

\subsubsection{Detecting Chaos}

A Poincar\'e section at $\zeta=0$ for the system \Eq{FareyFields} is shown in \Fig{FareyPoincare}
for four values of $\eps$. This figure shows only the range $\psi \in [0,\tfrac12]$, but for the 
symmetric amplitudes \Eq{FareyFieldParams}, $\eps_{m,n} = \eps_{m,m-n}$,
the phase portrait has the reflection symmetry $\psi \to 1-\psi$.
Thus the dynamics in the interval $\psi \in [\tfrac12,1]$ can be inferred.

When $\eps=0.05$, \Fig{FareyPoincare}(A) shows that almost all orbits lie on tori, 
though there will invariably be small chaotic regions not observed at this scale near the
separatrices of the island chains. When $\eps = 0.25$, a small amount of chaos 
is visible near the separatrices of the period four and five islands. When $\eps = 0.5$ these chaotic regions grow;
however, there are still rotational tori that act as barriers to transport between each of the primary island chains in the set \Eq{FareySet}.
For $\eps = 1.0$, \Fig{FareyPoincare}(D) shows that all of the rotational tori for $\psi$ in the interval $(0.2,0.8)$ have been destroyed, though some low-period islands persist in the sea of chaos. 

The true critical value of $\eps$ can be estimated numerically by looking for an orbit that 
``crosses'' all the resonances.
Starting at $(\theta_0,\psi_0, \zeta_0) = (0.375, 0.27, 0)$, close to the hyperbolic-point of the $(4,1)$ island, we found that the smallest $\eps$ for which $\psi(t) > 0.45$ for some time $t\in[0,10^4]$ is $\eps_{cr} = 0.665$. This is certainly consistent with the phase portraits in \Fig{FareyPoincare}.

Note that the regions of regular tori around $\psi = 0$ and around $\psi = 1$ persist as $\eps$ grows. This is because the tori $\psi = 0$ and $\psi=1$ are invariant, and the Farey island set \Eq{FareySet} does not include any terms below $\tfrac14$ and above $\tfrac34$.

\begin{figure}[ht]
	\centering
	\begin{subfigure}[b]{0.48\textwidth}
		\includegraphics[width = \linewidth]{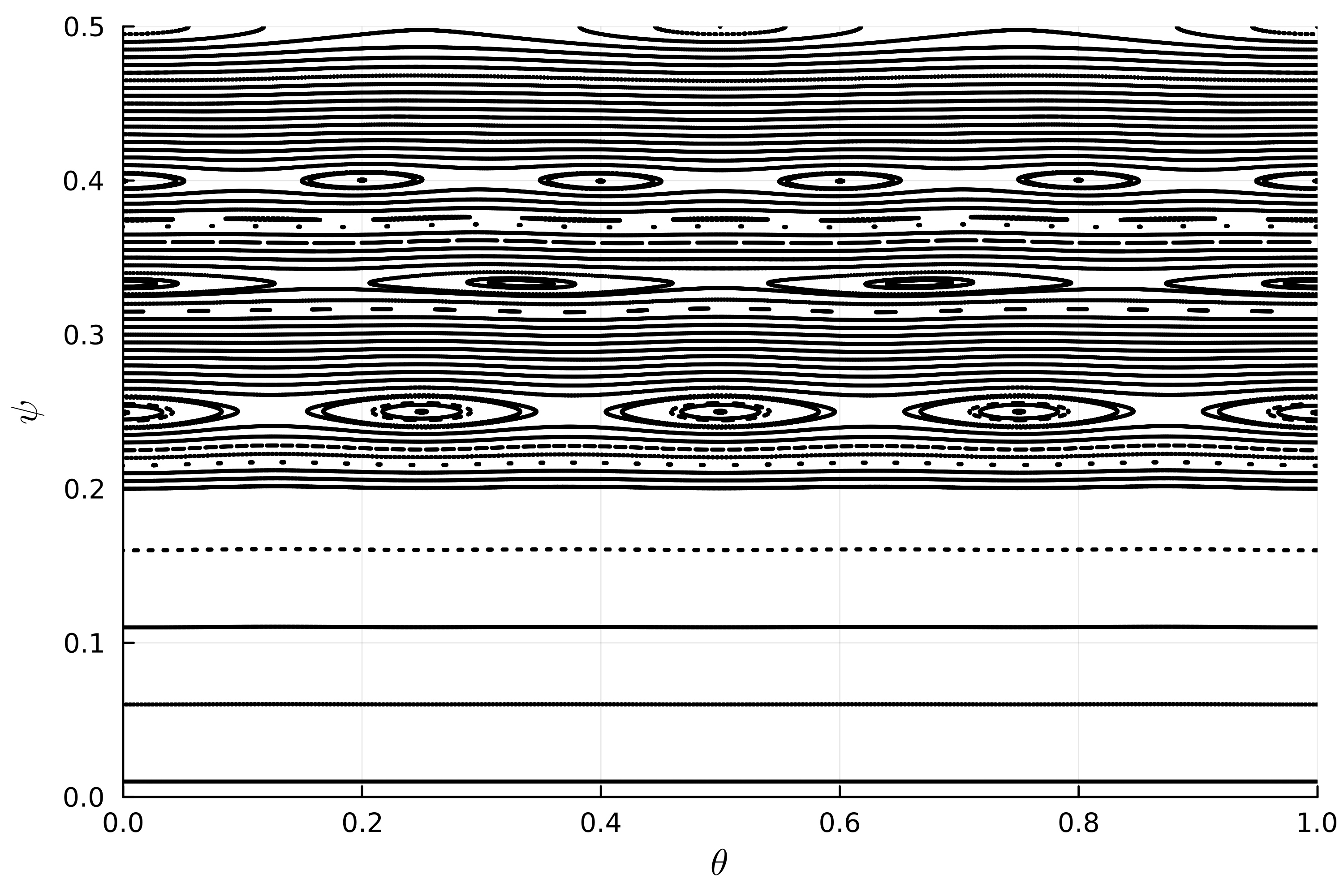}
		\caption{\footnotesize $\eps = 0.05$.}
	\end{subfigure}
	\hfill 
	\begin{subfigure}[b]{0.48\textwidth}
		\includegraphics[width = \linewidth]{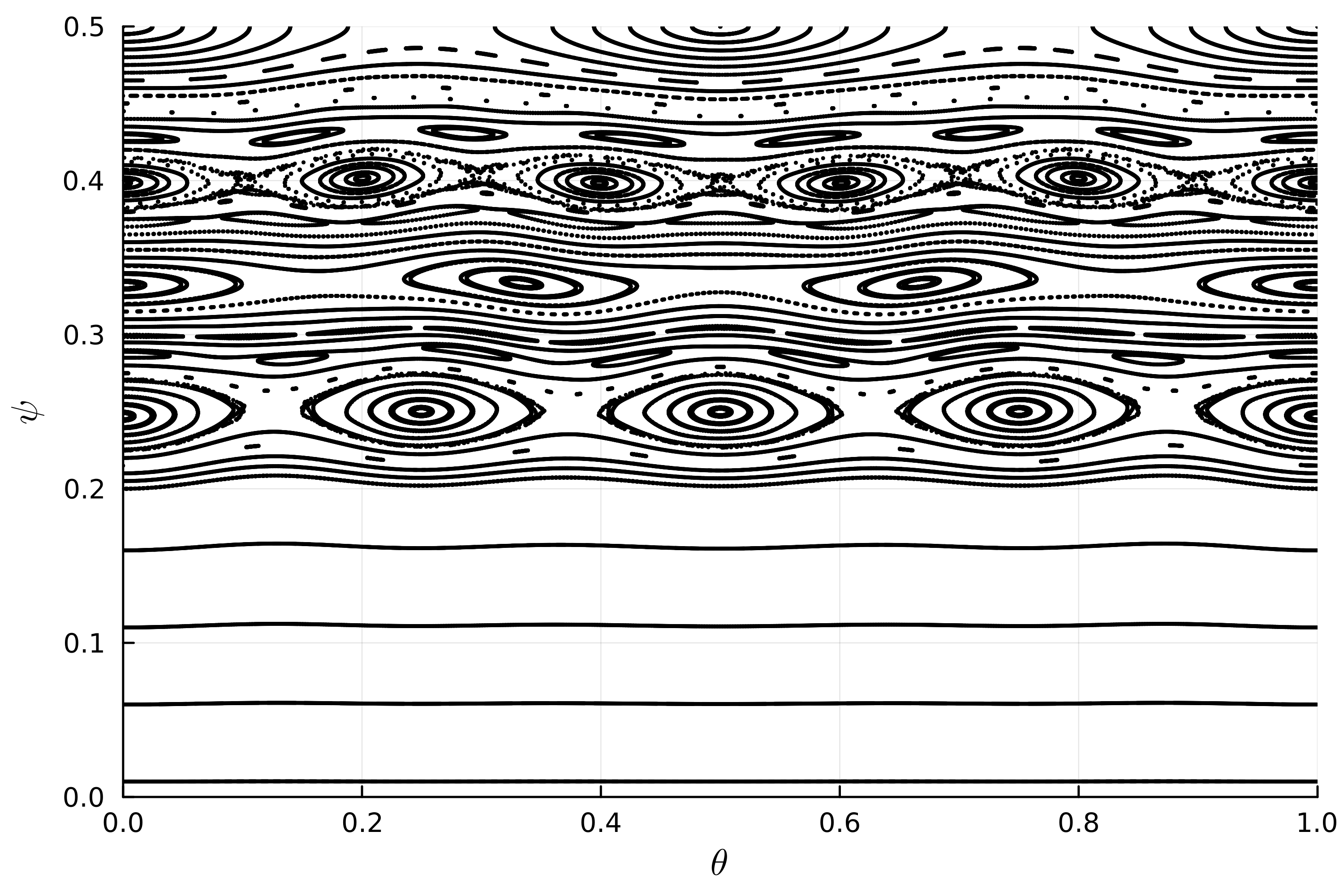}
		\caption{\footnotesize $ \eps = 0.25 $. }
	\end{subfigure}
	\vspace*{10pt}
	
	\begin{subfigure}[b]{0.48\textwidth}
		\includegraphics[width = \linewidth]{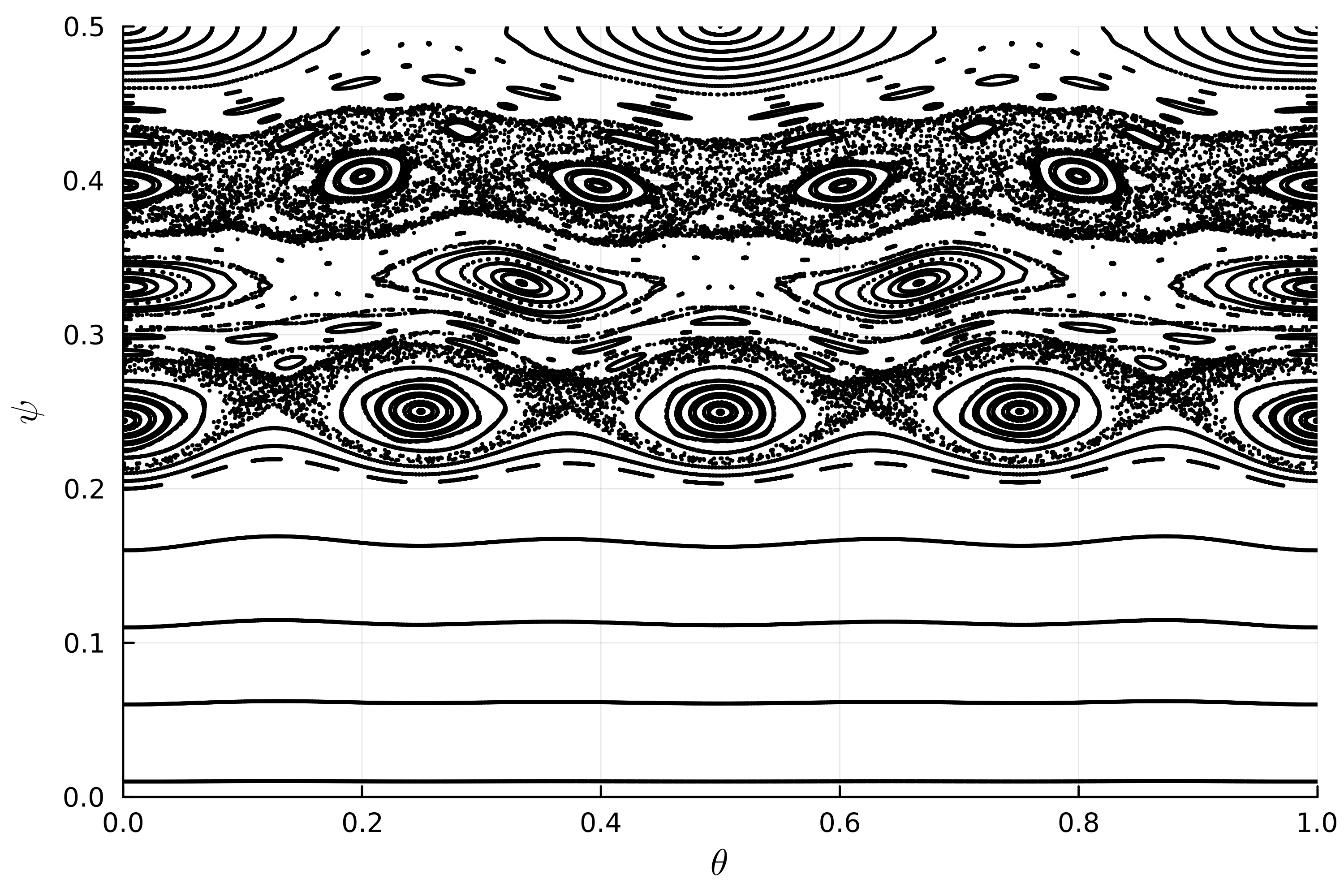}
		\caption{\footnotesize $\eps = 0.5$}
	\end{subfigure}
	\hfill
	\begin{subfigure}[b]{0.48\textwidth}
		\includegraphics[width = \linewidth]{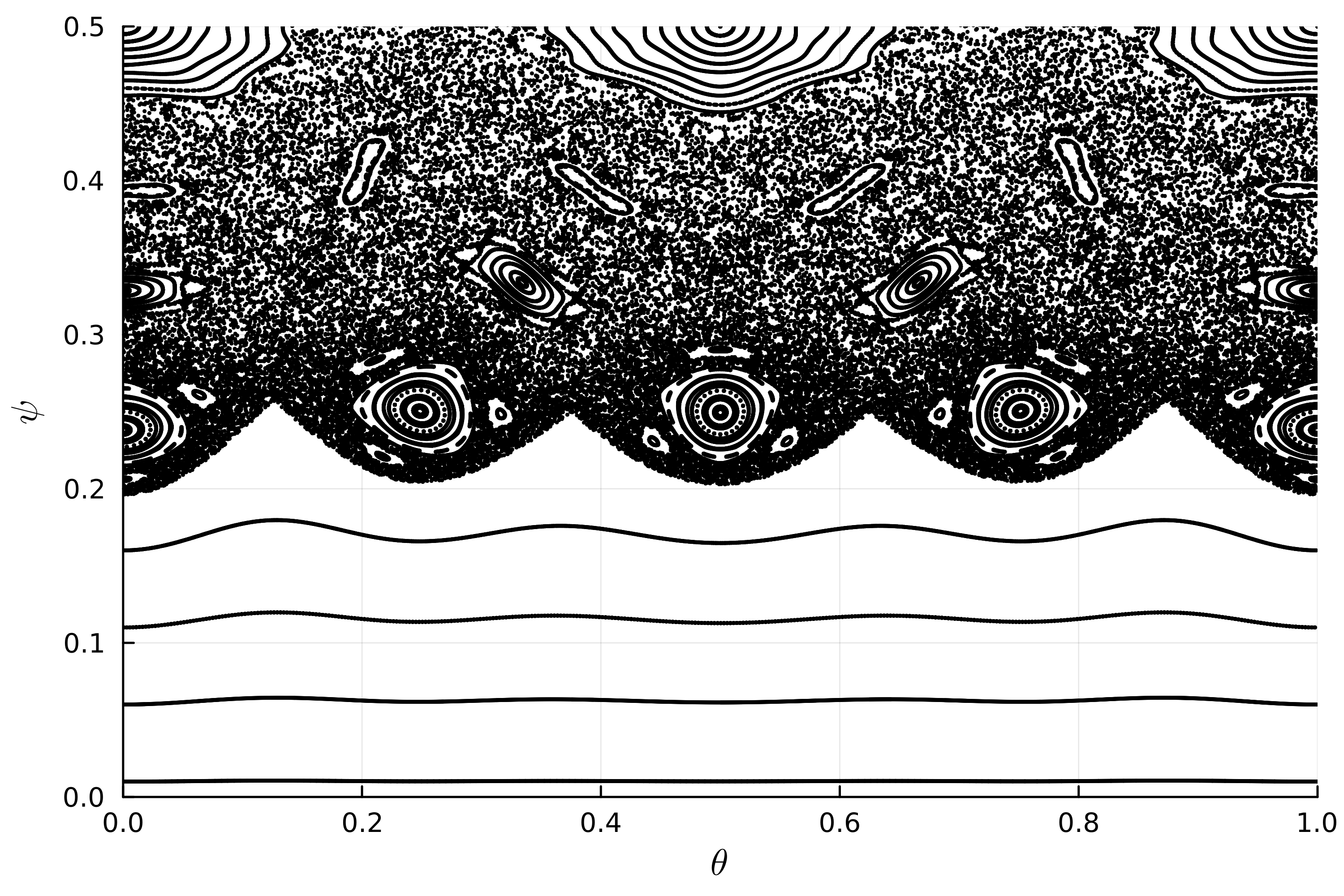}
		\caption{\footnotesize $\eps = 1.0$}
	\end{subfigure}
	\caption{\footnotesize Poincar\'e sections for \Eq{FareyFields} with parameters \Eq{FareyFieldParams} at $\zeta=0 \mod 1$ for for four values of $\eps$.}
	\label{fig:FareyPoincare}
\end{figure}

To study the onset of chaos using the weighted Birkhoff average, we 
computed the maximum digit accuracy, $\dig_T$, for initial conditions
$(\psi_0,0,0)$ with $\psi_0 \in [0,0.5]$. 
We choose $h = \psi$ so that $\langle \psi \rangle$ is the rotation number of a regular torus. 
The results for the same four values of $\eps$ in \Fig{FareyPoincare} are shown in \Fig{digVSp0_FF}. 

\begin{figure}[ht]
	\centering
	\begin{subfigure}[b]{0.48\textwidth}
		\includegraphics[width = \linewidth]{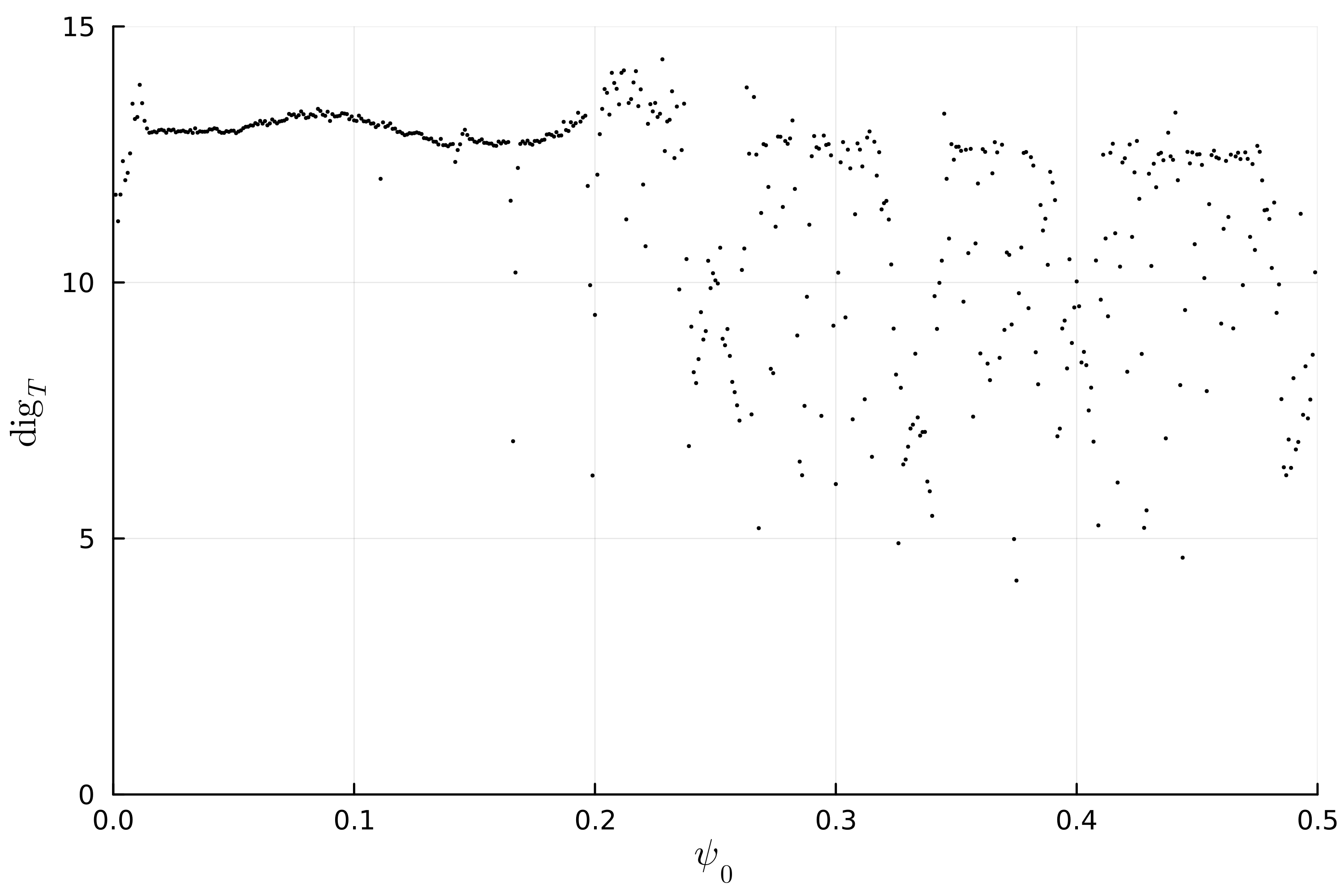}
		\caption{\footnotesize $\eps = 0.05$}
	\end{subfigure}
	\hfill 
	\begin{subfigure}[b]{0.48\textwidth}
		\includegraphics[width = \linewidth]{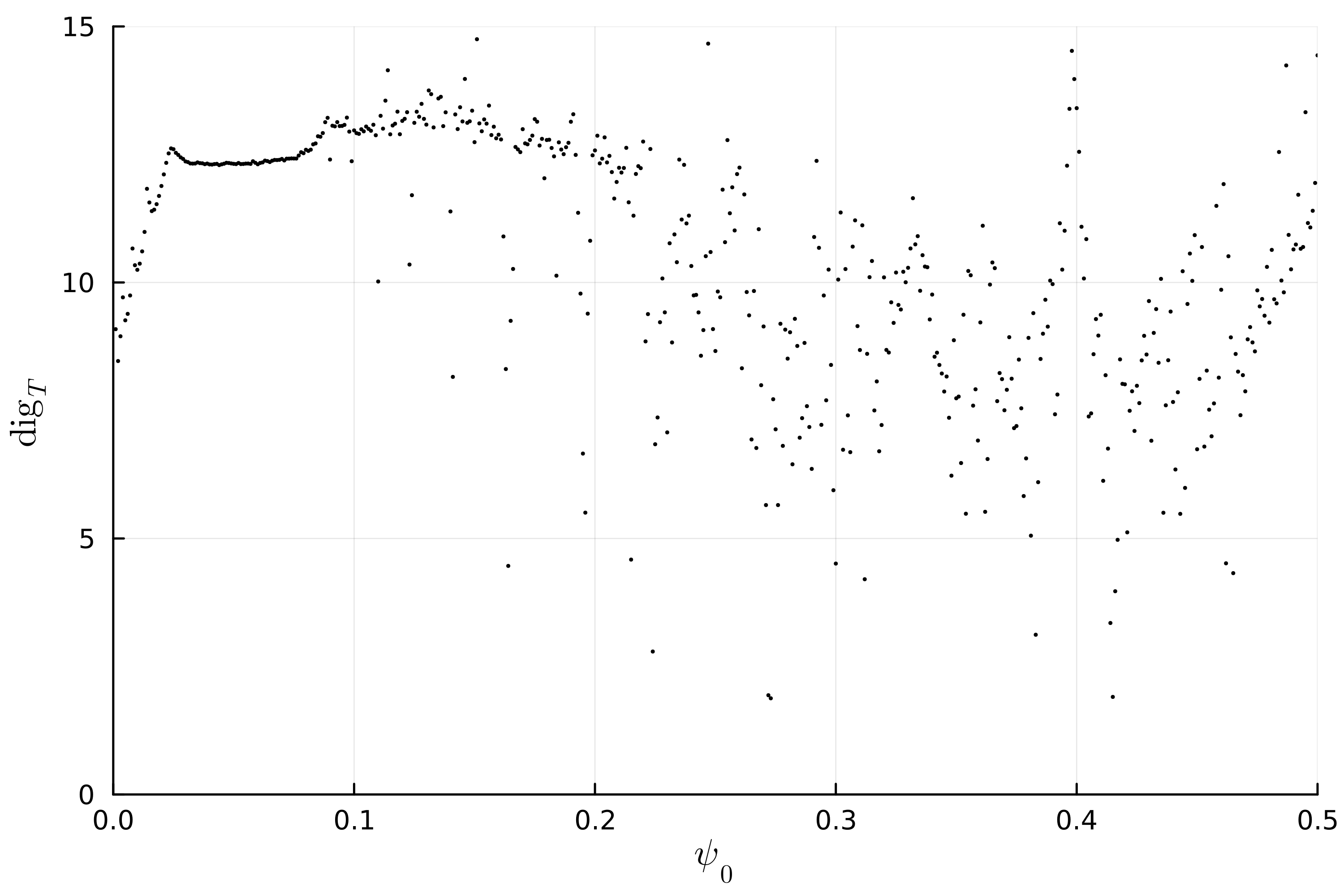}
		\caption{\footnotesize $\eps = 0.25$ }
	\end{subfigure}
	\vspace*{10pt}
	
	\begin{subfigure}[b]{0.48\textwidth}
		\includegraphics[width = \linewidth]{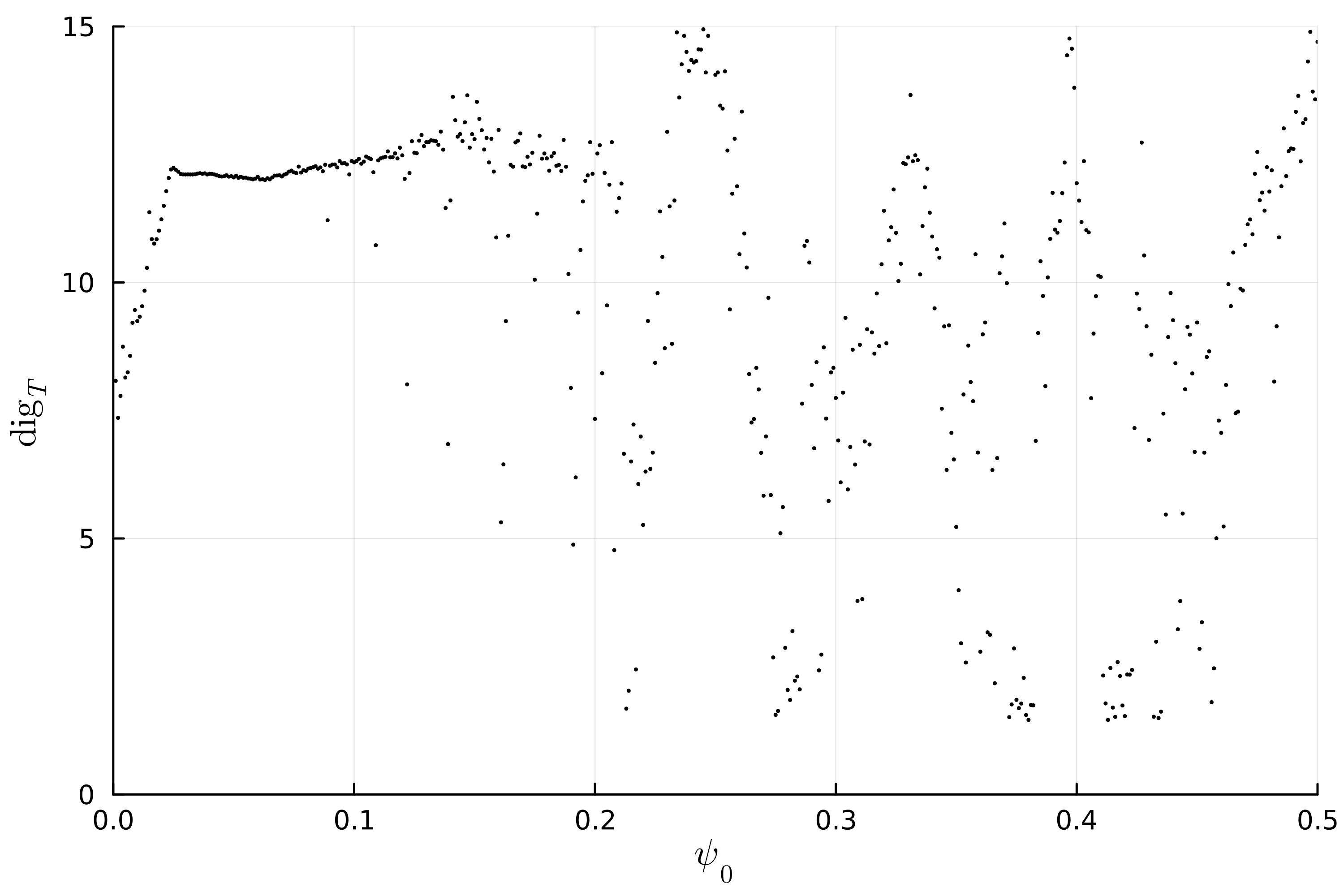}
		\caption{\footnotesize $ \eps = 0.5$}
	\end{subfigure}
	\hfill
	\begin{subfigure}[b]{0.48\textwidth}
		\includegraphics[width = \linewidth]{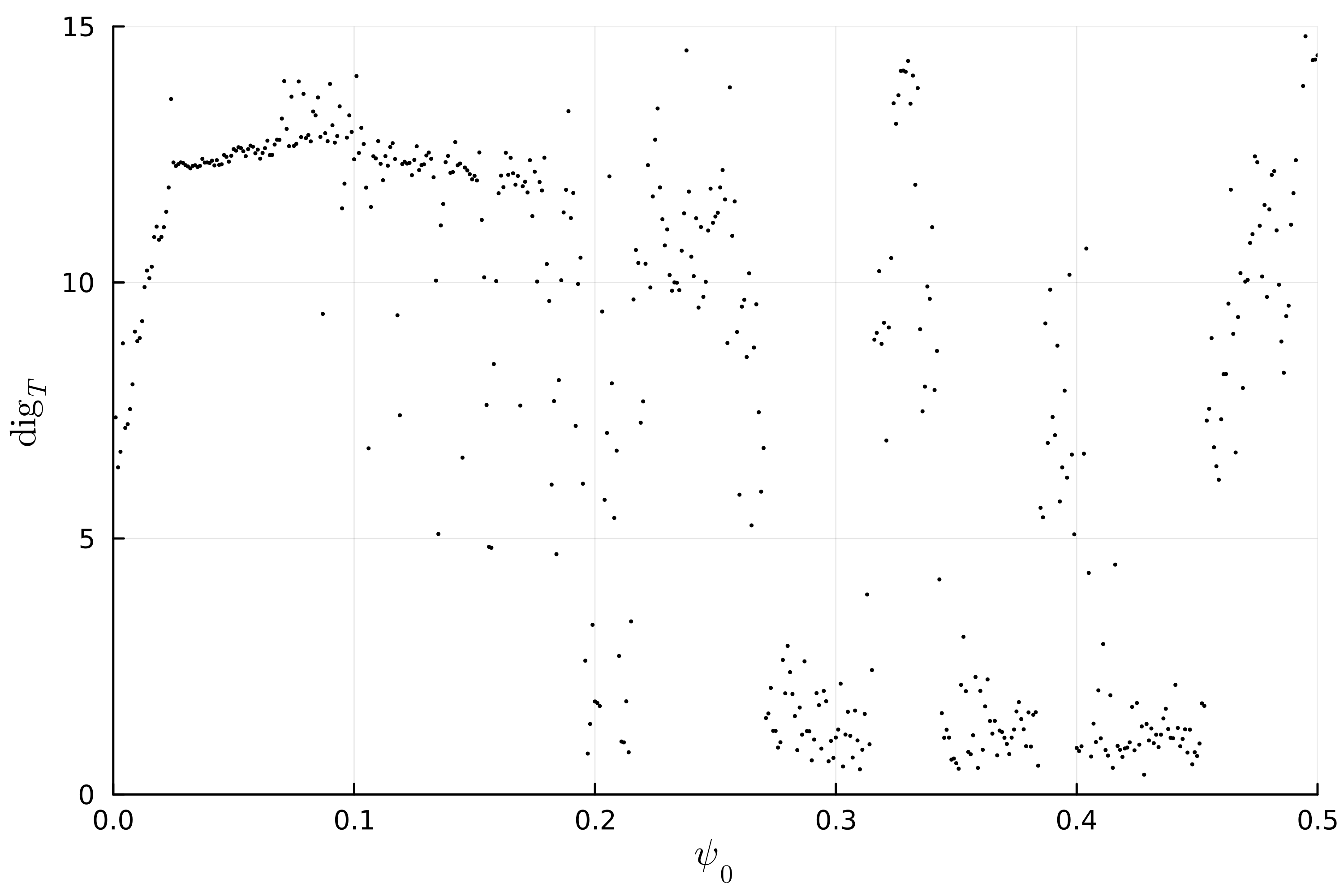}
		\caption{\footnotesize $\eps = 1.0$ }
	\end{subfigure}
	\caption{\footnotesize The maximum digit accuracy, $\dig_T(h)$, as a function of initial condition $\psi_0$ on a
	grid with steps of $0.001$, for four $\eps$ values. Here $T = 1000$ and $h(\psi,\theta,\zeta) = \psi$.}
	\label{fig:digVSp0_FF}
\end{figure}

As can be seen in the figure, the distinction between $\dig_T$ for chaotic and regular orbits becomes clearer as $\eps$ increases.
The criterion \Eq{ChaosCriterion} suggests that chaotic orbits have $\dig_T \lesssim 5$. Note that some of the regular orbits have
$\dig_T$ up to $15$, and that the nearly horizontal, rotational tori in the range $0 < \psi_0 < 0.2$ have $\dig_T \simeq 13$.
When $\eps = 0.05$, there are only $4$ initial conditions on the $501$ point grid that would be designated as chaotic;
though since $4 \le \dig_T \le 5$ for each of these, they are near the threshold.
As $\eps$ increases, chaotic regions surrounding the islands appear and grow. By $\eps = 0.25$,
there are small intervals of low-digit accuracy around the $\tfrac14$ and $\tfrac25$ islands;
however, only $14$ initial conditions have $\dig_T \le 5$.
As $\eps$ increases, the chaotic regions around the low-period islands grow, and for $\eps = 0.5$,
initial conditions with low $\dig_T$ are seen near $\psi_0 = 0.2$, $0.3$, $0.4$ and $0.5$.
Finally, when $\eps = 1.0$ the chaotic regions around the the low period islands merge, though the line of 
initial conditions with $\theta_0 = \zeta_0 =0$ goes through the elliptic center of 
each of the forced resonances in the set \Eq{FareySet}.
This gives rise to the peaks in $\dig_T$ near the low-order islands that are visible in \Fig{FareyPoincare}(D).
Of course, the orbits for $\psi_0 < 0.18$, where regular tori seen in \Fig{FareyPoincare}(D) persist, 
have a maximum digit accuracy that remains high.

To conclude, it is clear by comparing \Fig{FareyPoincare} and \Fig{digVSp0_FF}, that the weighted Birkhoff average accurately detects the onset of chaos for this family of Farey magnetic fields.

\subsubsection{A Measure of Non-integrability}

In \cite{paulHeatConductionIrregular2022}, the authors developed a measure of the ``effective volume of parallel diffusion'' as a proxy for measuring the nonintegrable region. To do this, they first solve the steady-state temperature using the anisotropic diffusion equation,
\beq{ADE}
	\nabla\cdot(\kappa_\parallel\nabla_\parallel T + \kappa_\perp\nabla_\perp T) = 0 .
\eeq
Here $T$ is the temperature, $\kappa_\parallel, \kappa_\perp$ are the parallel, perpendicular diffusion coefficients, respectively, 
and $\nabla_\parallel = \hat{b}\hat{b} \cdot \nabla$ and $\nabla_\perp$ are the gradients parallel
and perpendicular to the magnetic field $B$, respectively. 
Equation \Eq{ADE} is solved with the Dirichlet boundary conditions that fix $T$ on the boundary tori, $\psi = 0$ and $1$.
The measure they use is the fraction of the volume $\Omega$ in which the local parallel heat transport is larger than the perpendicular transport:
\beq{effectiveNI}
	\cV_{PD} = \frac{1}{\operatorname{Vol}(\Omega)}\displaystyle \int_{\Omega} \Theta\left(\kappa_\parallel |\nabla_{\parallel} T|^2 - \kappa_\perp |\nabla_{\perp} T|^2 \right) dx^3.
\eeq
Here $\Theta$ is the Heaviside step function.

In order that \Eq{effectiveNI} be an effective measure of integrability, Paul et. al. \cite{paulHeatConductionIrregular2022} argue that when $\kappa_\perp \ll \kappa_\parallel$, $T$ is approximately constant along field lines of $B$. It follows, for a region foliated by invariant tori, $|\nabla_\parallel T|$ will be relatively small, and thus the measure $\cV_{PD}$ will be essentially zero. 
Conversely, they argue that regions of phase-space with chaotic field lines will have relatively large parallel diffusion. This second claim is shown by first proving that surfaces of constant temperature must have the same topology as the boundary surfaces. Consequently, these isotherms will not be able to completely align to the structure of the field in chaotic regions, increasing the value of $|\nabla_\parallel T|$. 
However, as the authors remark, the effective parallel diffusion will also be high within islands, even if they are not chaotic. Thus regions where the invariant surfaces do not have the same topology as the boundary
will also contribute to \Eq{effectiveNI}. 
In this regard, the measure of parallel diffusion is analogous to converse KAM theory \cite{MacKay89a, Duignan20}.

Regardless of what \Eq{effectiveNI} precisely measures, such a measure may in fact be more useful to the original purposes of \cite{paulHeatConductionIrregular2022} in optimizing the structure of magnetic fields for plasma confinement.
However, the weighted Birkhoff averaging may provide a reasonable alternative measure of chaos if that is desired. 

A simple measure of integrability is the relative fraction of initial conditions deemed chaotic by weighted Birkhoff averaging. For the Farey magnetic fields we first used the same initial initial conditions as in \Fig{digVSp0_FF}: $(\psi_0,\theta_0,\zeta_0) = (\psi_0,0,0)$. 
For each $\eps \in [0,1.0]$ in steps of $0.01$, we computed $\dig_T$ for $T = 1000$ and $h = \psi$.
An orbit was deemed chaotic if $\dig_T < 5$ and regular otherwise. 
The relative fraction of chaotic initial conditions for each $\eps$ is shown using the symbol `+' in \Fig{crudeMeasure}.
A similar computation was performed along the line $\theta_0 = 0.15$.
The result points are shown in \Fig{crudeMeasure} using the symbol $\bullet$.

\begin{figure}[ht]
	\centering
	\includegraphics[width=0.7\linewidth]{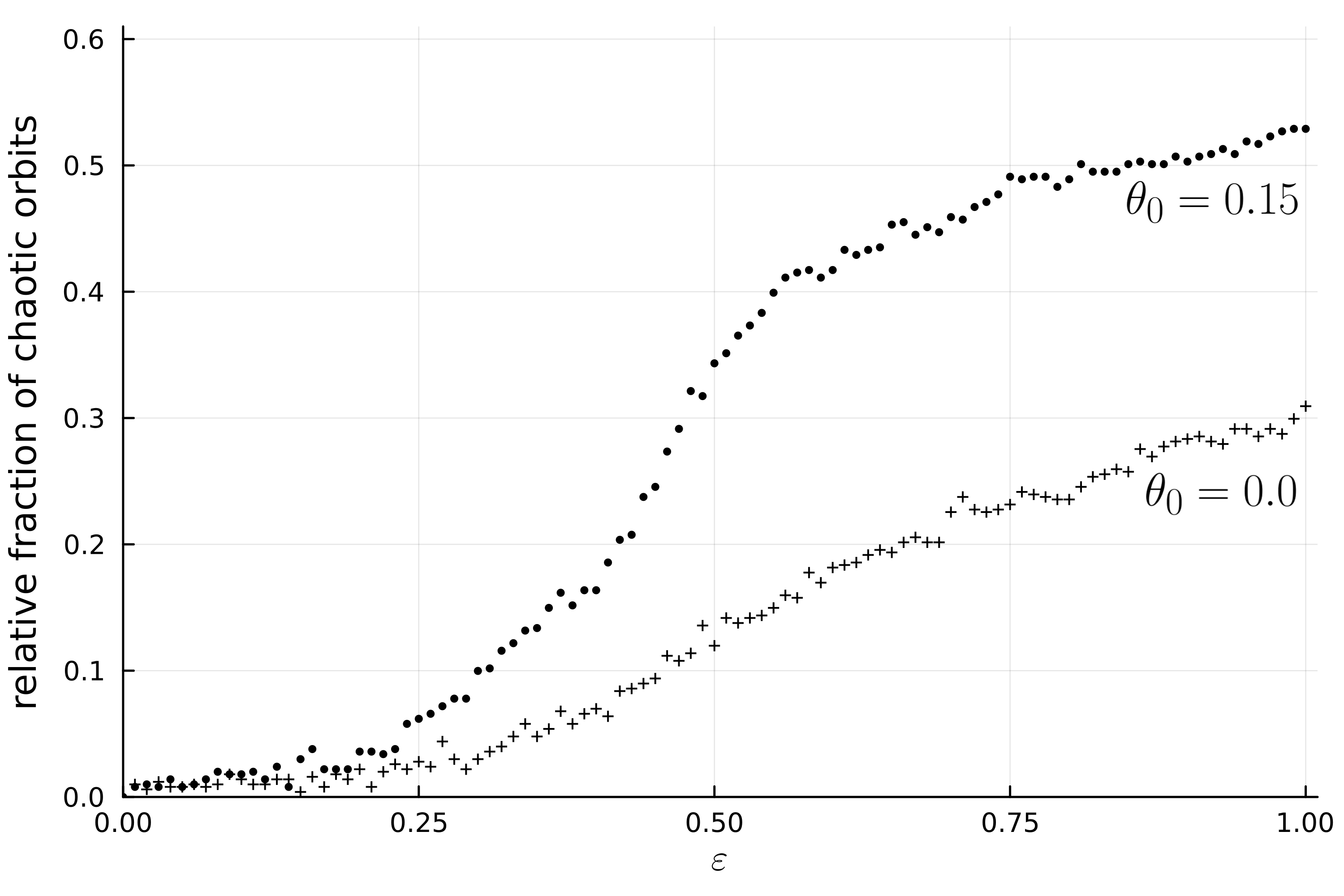}
	\caption{\footnotesize The fraction of initial conditions that are chaotic for \Eq{FareyFields} with $\eps\in[0,1.0]$,
	and initial conditions $(\psi_0,\theta_0,0)$ with $\psi_0 \in[0,0.5]$ in steps of $ 0.001 $. }
	\label{fig:crudeMeasure}
\end{figure}

For both sets of initial conditions, the fraction of chaotic initial conditions is observed to vanish when $\eps = 0$, and increase---though not always monotonically--- with $\eps$. This is consistent with \Fig{FareyPoincare}. For larger $\eps$, the rate of increase of the chaotic fraction slows. This is most prominent for $\theta_0 = 0.15$ near $\eps = 0.6$, just below $\eps_{cr} = 0.665$ where the last rotational tori in the interval $\psi_0 \in [0.18,0.5]$ are destroyed. 

An issue with the use of the fraction of chaotic orbits along a line of initial conditions in the 3D phase space is evident in the difference between the two cases in \Fig{FareyPoincare}. The line
$\theta_0= 0$ is special because it intersects the rotational periodic orbits at their elliptic points; this
is evidently due to a time-reversal symmetry of the system \Eq{FareyFields} under the involution
\[
	(\psi,\theta,\zeta ) \to (\psi,-\theta,-\zeta).
\]
A similar time-reversal symmetry of the Chirikov standard map results in the so-called
``dominant symmetry line" that contains all the minimax rotational periodic orbits
\cite{meissSymplecticMapsVariational1992}---these are the orbits that are elliptic
for small perturbations. The result is that the sample of initial conditions along the line $\theta_0 = \zeta_0 = 0$ 
includes more regular orbits, the elliptic islands around each of these periodic orbits.
In contrast, the line $\theta_0 = 0.15$ intersects fewer of the elliptic regions around the islands.
Hence, the fraction of chaotic orbits for $\theta_0 = 0$ is less than that for $ \theta_0 = 0.15 $.

Of course, a more general system than \Eq{FareyFields}, e.g., one with added phases in each Fourier term,
would not have this symmetry and would thus be less susceptible to this problem. 
In any case, this issue could be ameliorated by sampling initial conditions on a 2D grid in the Poincar\'e section; of course, this would add the the computational expense. One could keep track of which grid cells are visited by each orbit, so as to reduce the number that need to be considered.


\section{Discussion}

We have investigated the utility of the WBA for distinguishing between regular and chaotic orbits for the two-wave Hamiltonian system, a quasiperiodically forced, dissipative system that has
a strange attractor with no positive Lyapunov exponents, and a model for magnetic field line flow.
It was shown that the WBA is super-convergent when the dynamical system and phase space function are smooth and the dynamics is either conjugate to a rigid rotation with Diophantine rotation vector or more generally satisfies \Eq{cohomologous}. The contrasting, relatively slow convergence of chaotic trajectories provides an efficient discrimination criterion. However, there remain some open questions and interesting further directions.

A first theoretical question is that of the convergence of the weighted Birkhoff average for general ergodic flows.
In each application it was observed that the WBA for chaotic orbits converged relatively slowly in comparison
to the regular orbits. This formed the basis for the WBA as a method to detect chaos.
However, this slow convergence does not yet have a theoretical foundation.
It may be possible to show that \Eq{cohomologous} is not only sufficient for super-polynomial convergence,
but also necessary. If this is true, then it may provide a path forward to theoretically
confirming the slow convergence for chaotic orbits observed in this paper.

One of the benefits of the WBA is that it can provide an accurate computation of the average of a phase space function. 
Indeed, when the average converges, one gets---for free---a good approximation to $\langle h \rangle$.
Consequently, given a physically important $h$, such as rotation number,
its value is computed as a free by-product of the method.
Conversely, if the main goal is to compute an orbit average of some smooth function,
then super-convergence of the WBA on regular orbits, also gives---for free---a criterion
distinguishing between regular and chaotic behavior.

This poses the question: which $h$ is optimal for chaos detection? This appears to be a difficult question.
It is clear that some choices are poor, and this is supported by the convergence theorems for the Birkhoff average
in \cite{Krengel78,Kachurovskii96}. Moreover, an everywhere constant function is also obviously a poor choice
for a different reason: its average over any orbit is the same for any $T$. In some sense, an ideal
function for distinguishing chaos would be constant on regular orbits, but vary on chaotic ones.
In this case the average along the latter should still exhibit the characteristically slow convergence
of the above applications. Of course, if one were able to construct such a function,
then one would already know the orbital structure of the flow, obviating the necessity of a computation. 

This argument suggests that an approximate integral of the system might be a good choice for $h$. 
Such a choice would ensure that $h$ has little variation on regular orbits, still leaving room to 
see the distinction between convergence
for regular and chaotic orbits. We obtained several such approximate integral functions
for the two-wave system in \cite{Duignan20}. However, our preliminary studies using these
approximate integrals as $h$ in two-wave system did not appear to produce a stronger contrast in $\dig_T$  
between chaotic and regular orbits. In the future, we hope to 
investigate the choice of $h$ in the weighted Birkhoff average as a means of detecting chaos.

A further line of future study is that of an effective measure of (non)-integrability. 
This was one of the primary aims of the work in \cite{paulHeatConductionIrregular2022}. 
Such a measure would help in optimizing field configurations by minimize chaos.
There are several improvements of the crude measure of \cref{fig:crudeMeasure} that one could
implement and use to understand chaos in magnetic confinement devices. 

Finally, it was evident in \cref{sec:QPendulum} that the WBA can distinguish between regular and strange 
``non-chaotic attractors''. Thus, the convergence rate distinction for the WBA does not rely on 
exponential divergence of orbits. Future investigation is needed to understand precisely
which types of dynamics this method can accurately discern.

\section*{Acknowledgements}
The authors acknowledge support of the Simons Foundation through grant \#601972 
``Hidden Symmetries and Fusion Energy." 
Useful conversations with E. Sander are gratefully acknowledged. 

\section*{Data Availability}
The data that support the findings of this study are available within the article.


\bibliographystyle{alpha}
\bibliography{WeightedBirkhoff}

\end{document}